\documentclass[12pt]{amsproc}
\makeatletter
\def\@@and{}
\makeatother
\usepackage[utf8]{inputenc}
\usepackage{amsmath,amsfonts,amssymb,amsthm,mathtools}
\usepackage{xcolor}
\usepackage{graphicx}
\usepackage{tikz}
\usepackage{pgfplots}
\pgfplotsset{compat=1.18}
\usepackage{fullpage}
\usepackage{hyperref}
\hypersetup{
    colorlinks=true,
    citecolor=cyan,
    linkcolor=blue,
    filecolor=magenta,
    urlcolor=cyan,
    pdftitle={Linear response for random systems with a cusp},
    pdfauthor={Karim Rakhimov, Marks Ruziboev, Davrbek Oltiboev}
}

\newtheorem{theorem}{Theorem}[section]
\newtheorem{lemma}[theorem]{Lemma}

\newtheorem{proposition}[theorem]{Proposition}

\theoremstyle{definition}
\newtheorem{definition}[theorem]{Definition}

\theoremstyle{remark}

\numberwithin{equation}{section}
\newcommand{\abs}[1]{\lvert#1\rvert}
\newcommand{\norm}[1]{\lVert#1\rVert}

\newcommand{\R}{\mathbb R}
\renewcommand{\P}{\mathbb P}
\DeclareMathOperator{\Supp}{Supp}
\DeclareMathOperator{\spec}{spec}

\title{Linear response for random systems with a cusp}

\author[D.~Oltiboev]{Davrbek Oltiboev}
\address{V.I. Romanovskiy Institute of Mathematics, Uzbekistan Academy of Sciences}
\address{Tashkent, Uzbekistan}
\email{davrbek.oltiboyev@gmail.com}

\author[K.~Rakhimov]{Karim Rakhimov}
\address{V.I. Romanovskiy Institute of Mathematics, Uzbekistan Academy of Sciences}
\address{Tashkent, Uzbekistan}
\email{karimjon1705@gmail.com}

\author[M.~Ruziboev]{Marks Ruziboev}
\address{V.I. Romanovskiy Institute of Mathematics, Uzbekistan Academy of Sciences}
\address{Tashkent, Uzbekistan}
\email{marx.ruziboev@gmail.com}

\date{}

\begin{document}

\begin{abstract}
We study i.i.d.\ random compositions of cusp tent-like interval maps having a common cusp point and a common cusp value. For cusp exponents
$-1<\beta<-\frac12$, we prove that the associated annealed transfer operators have a spectral gap on $W^{1,1}(I)$ and $W^{2,1}(I)$. For all sufficiently small perturbations of the probability law, there is a stationary density $h_\varepsilon\in W^{2,1}(I)$, unique among stationary densities belonging to $W^{1,1}(I)$. Moreover, the map $\varepsilon\mapsto h_\varepsilon$ is differentiable at $\varepsilon=0$ in $W^{1,1}(I)$, and we obtain an explicit linear response formula.
\end{abstract}
\maketitle

\section{Introduction}
Invariant and stationary measures play a key role in the statistical study of
chaotic dynamical systems. For a large class of one-dimensional systems the
relevant invariant measure is absolutely continuous with respect to Lebesgue
measure, so that, by Birkhoff's ergodic theorem, it describes the asymptotic
behaviour of the orbits of a set of initial conditions of positive Lebesgue
measure. A question, which is interesting from  both theoretical and applied interest is the stability of
invariant ergodic measures under perturbations of the underlying dynamical system. When the
invariant density depends differentiably on the perturbation parameter in a
suitable topology, the system is said to admit \emph{linear response}, and the
derivative of the density is the response of the system to an infinitesimal
change of the dynamics.

The first results in this direction go back to Ruelle \cite{Ruelle}, who proved
differentiability of the SRB state for smooth uniformly hyperbolic systems, and
to Dolgopyat \cite{Dolgo}, who treated a class of partially hyperbolic systems.
Abstract functional-analytic framework was subsequently developed, in which the
transfer operator is made quasi-compact on a Banach space adapted to the
dynamics and the perturbed spectral data are controlled by the spectral
stability theory of Keller and Liverani from \cite{KL}. For the results in this direction, see \cite{Baladi, GL} and the
survey \cite{Balorelse}. For one-dimensional maps, linear response is by now
understood in several regimes. For piecewise expanding unimodal maps
\cite{BalSma08}, for smooth unimodal maps along transversal families
\cite{BalSma1, BalBenSchn}, for intermittent maps with an indifferent fixed
point \cite{BalTodd, Kor, BS, Lep}, and in connection with periodic-point
expansions and rigorous computation \cite{PolVit, BGNN}. Situations in which the
regularity of the response is limited by the observable rather than by the
dynamics are considered in \cite{BalKun}.

Stationary measures describe the statistical behaviour of random dynamical systems. A natural question is how such measures change when the probability law selecting the maps is perturbed. When the stationary density varies differentiably in a suitable topology, the system is said to admit linear response; see, in the random setting, \cite{BRS}.

Linear response is, however, not a generic phenomenon. Baladi \cite{Bal2007}
showed that the susceptibility function of a piecewise expanding interval map
need not admit the analytic continuation required by the response formula, and
Baladi and Smania \cite{BalSma08} proved that, for families of tent-like maps
with bounded derivatives, differentiability of $\varepsilon\mapsto h_\varepsilon$
holds along perturbations tangent to the topological class but fails for typical
perturbations which move the value of the turning point. For smooth unimodal
maps along transversal families, only Whitney--H\"older regularity in the
parameter is available \cite{BalBenSchn}. We refer to \cite{Balorelse} for a
discussion of these obstructions.

Stationary measures play the role of invariant measures for random dynamical
systems, and the corresponding question is how a stationary measure reacts to a
perturbation of the data defining the random system: either the maps which are
composed, or the probability law according to which they are selected.
Randomness is in some respects a regularizing mechanism, since averaging over
the noise can smooth the dependence of the stationary measure on the parameter,
and linear response may hold in situations in which the deterministic statement
fails. Response formulas for maps subject to additive noise, allowing critical
points as well as contracting and expanding regions, were obtained by Galatolo
and Giulietti \cite{GG}; see also \cite{HM} for an early framework in this
spirit, \cite{ADF} for the associated optimal-response problems and
\cite{GaSe} for second-order response. Closest to the present paper is the work
of Bahsoun, Ruziboev and Saussol \cite{BRS}, who studied i.i.d.\ compositions of
interval maps chosen according to a distribution $\P$ and proved
differentiability of the absolutely continuous stationary measure under a
perturbation $\P\mapsto\P_\varepsilon$ of the law, together with an explicit
response formula; their results cover, in particular, random systems whose
annealed transfer operator need not have a spectral gap. Quenched versions of
the problem, in which one differentiates the equivariant measures of the
fibrewise dynamics, were developed in \cite{DS, RS, DGS, CN}, and extended to
random and sequential intermittent maps in \cite{DGTS}; see also \cite{Sedro}
for an abstract fixed-point regularity result underlying several of these
arguments.

In the present paper we combine the cusp mechanism of \cite{BG} with the
random-law perturbation mechanism of \cite{BRS}. We keep a compact family
$\{f_\alpha\}_{\alpha\in\Omega}$ of cusp tent-like maps fixed and perturb only
the probability measure on the parameter space. The cusp point $c$ and the cusp
value $a$ are common to all maps of the family. Consequently the inverse-branch
formula for $L_\alpha$ has the same support interval $[0,a]$ for every
parameter, and no moving-boundary term appears when the map parameter is
differentiated. Our main result, Theorem~\ref{thm:main-LR}, asserts that for all
sufficiently small perturbations of the law there is a stationary density
$h_\varepsilon\in W^{2,1}(I)$, unique among stationary densities belonging to
$W^{1,1}(I)$, that the map $\varepsilon\mapsto h_\varepsilon$ is differentiable
at $\varepsilon=0$ in $W^{1,1}(I)$, and that its derivative is given by an
explicit resolvent formula.
 
Our results develop the pioneering work \cite{BG} in the following ways. First, the admissible range of the cusp
exponent is enlarged. In \cite{BG} the standing assumption is
$\beta\in(-1,-3/4)$, which comes from requiring square integrability of the most
singular coefficient appearing in the second derivative of $L_\alpha\phi$. Here
that zero-order term is estimated instead by combining an $L^1$ bound on the
coefficient with the embedding $W^{1,1}(I)\hookrightarrow C^0(I)$, which yields
the larger range $\beta\in(-1,-1/2)$. Second, the topology in which the response
is obtained is stronger: \cite{BG} gives differentiability of
$\varepsilon\mapsto h_\varepsilon$ in $L^1(I)$, whereas we obtain it in
$W^{1,1}(I)$ and hence, by the same embedding, uniformly on $I$. On the other
hand, the perturbations considered in \cite{BG} move the value of the turning
point, and this is exactly the class of perturbations excluded by our standing
assumption that $a$ is common to the family; it is this restriction which
removes the boundary terms and makes the stronger topology accessible. In this
sense the two results are complementary rather than nested. We note finally that
the deterministic case is contained in our setting: taking $\P=\delta_0$ and
$\P_\varepsilon=\delta_\varepsilon$ reduces Theorem~\ref{thm:main-LR} to a
statement about the one-parameter family $\{f_\varepsilon\}$, and gives a
$W^{1,1}$-response for deterministic cusp families with fixed cusp point and
cusp value. This is carried out in Section~\ref{sec:example}. Also, notice that the results of this paper can be extended to $W^{1, p}$ spaces in the spirit of \cite{CHD}.  
 
The comparison with \cite{BRS} is of a different nature. We follow the
perturbative scheme introduced there: the perturbation acts on the law and not
on the maps, differentiability of $\varepsilon\mapsto\P_\varepsilon$ is
understood in the weak sense of Definition~\ref{def:order-one-1D}, and the
response is read off at the end from a resolvent identity, as in
\cite[Proposition~6.2]{BRS}. The class of systems and the function spaces are,
however, different. 
The systems treated in \cite{BRS} include uniformly expanding
families as well as induced non-uniformly expanding systems such as
Gauss--R\'enyi and Pomeau--Manneville maps. The distinction relevant
here is therefore not simply bounded versus unbounded derivatives,
but the common-cusp Sobolev structure and the fixed support interval
used in our argument.
The random maps treated in \cite{BRS} have bounded assumed to have full branch inducing schemes with uniformly bounded distortion. We notice that the construction of such inducing schemes is highly non-trivial task and it is not clear in the setting of the current paper it is not clear how to construct such inducing schemes.  In our situation the
derivatives $f_\alpha'$ blow up at the common cusp point, so that the
bounded-variation setting is not available; in return, the cusp produces uniform
Lasota--Yorke inequalities for the pairs $(W^{1,1}(I),L^2(I))$ and
$(W^{2,1}(I),W^{1,1}(I))$, and therefore a genuine spectral gap for $L_{\P}$ on
both Sobolev spaces. This is what allows the response to be obtained in
$W^{1,1}(I)$. A further difference is the role played by mixing: assumption
\textup{(A7)}, which is a topological mixing hypothesis on the skew product
restricted to $\Supp\P\times[0,a]$, replaces the covering-type conditions of
\cite{BRS} and is used to exclude non-trivial peripheral spectrum and to obtain
uniqueness of the stationary density in $W^{1,1}(I)$.
 
The proof proceeds as follows. We first prove uniform Lasota--Yorke inequalities
give quasi-compactness on $W^{1,1}(I)$ and $W^{2,1}(I)$. In the random setting,
assumption~\textup{(A7)} is converted into an open-set positivity property; this
gives uniqueness of the stationary density in $W^{1,1}(I)$ and excludes
non-trivial peripheral cycles. We then prove the continuity properties required
by the Keller--Liverani theorem \cite{KL} and establish genuine $W^{1,1}$-valued
differentiability of $\alpha\mapsto L_\alpha h$ for $h\in W^{2,1}(I)$. Finally,
the resolvent argument used in \cite[Proposition~6.2]{BRS} yields the linear
response formula in $W^{1,1}(I)$.
 
The paper is organised as follows. In Section~\ref{sec:setting} we introduce the
random cusp maps, the perturbations of the probability law, the standing
assumptions \textup{(A1)--(A7)}, and we state the main result. Section~3 is
devoted to its proof. In Section~\ref{sec:example} we exhibit an explicit family
satisfying all the assumptions, for which the theorem gives a non-trivial
response formula.

\section{Random cusp maps and statement of the main result}\label{sec:setting}

Let $I=[0,1]$ and let $m$ denote Lebesgue measure on $I$. Let $\Omega\subset\R$ be a compact interval, equipped with its Borel $\sigma$-algebra $\mathcal F$, and let $\P$ be a probability measure on $(\Omega,\mathcal F)$. For every $\alpha\in\Omega$, let $f_\alpha:I\to I$ be a cusp map. Parameter derivatives at the endpoints of $\Omega$ are understood in the one-sided sense. A representative admissible map is shown in Figure~\ref{fig:cusp-family}, and the concrete family is described in Section~\ref{sec:example}.

If $\omega=(\omega_0,\omega_1,\ldots)\in\Omega^{\mathbb N}$ is distributed according to $\P^{\mathbb N}$, the associated random orbit is
\[
 x,\quad f_{\omega_0}(x),\quad
 f_{\omega_1}\circ f_{\omega_0}(x),\quad \ldots,\quad
 f_{\omega_{n-1}}\circ\cdots\circ f_{\omega_0}(x),\quad \ldots.
\]
Let $\sigma$ be the one-sided shift on $\Omega^{\mathbb N}$. The corresponding skew product is
\[
 T(\omega,x)=(\sigma\omega,f_{\omega_0}(x)).
\]

For every $\alpha\in\Omega$, let
$L_\alpha:L^1(I)\to L^1(I)$ be the transfer operator of $f_\alpha$, defined by
\[
 \int_I(L_\alpha\phi)\,\psi\,dm
 =
 \int_I\phi\,(\psi\circ f_\alpha)\,dm,
 \qquad
 \phi\in L^1(I),\quad \psi\in L^\infty(I).
\]

A probability measure $\mu$ on $I$ is called stationary if
\[
 \mu(A)=\int_\Omega\mu(f_\alpha^{-1}(A))\,d\P(\alpha)
\]
for every Borel set $A\subset I$. The annealed transfer operator is
\[
 L_{\P}\phi
 :=
 \int_\Omega L_\alpha\phi\,d\P(\alpha).
\]
If $\mu=h\,dm$, then $\mu$ is stationary precisely when
\[
 L_{\P}h=h,
 \qquad h\ge0,
 \qquad \int_Ih\,dm=1.
\]
Thus, an absolutely continuous stationary measure is determined by a function $h\ge0$ satisfying $L_{\P}h=h$ and $\int_Ih\,dm=1$.

We work on the Sobolev spaces $W^{1,1}(I)$ and $W^{2,1}(I)$, equipped with
\[
 \norm{\phi}_{W^{k,1}}
 :=
 \sum_{j=0}^k\norm{\phi^{(j)}}_{L^1},
 \qquad k=1,2.
\]
For $k=1,2$, let
\[
 W^{k,1}_0(I)
 :=
 \left\{\phi\in W^{k,1}(I):\int_I\phi\,dm=0\right\}.
\]

We use throughout the one-dimensional embeddings $W^{1,1}(I)\hookrightarrow C^0(I)$, $W^{2,1}(I)\hookrightarrow C^1(I)$ and the compact embeddings $W^{1,1}(I)\Subset L^2(I)$, $W^{2,1}(I)\Subset W^{1,1}(I)$.

We now introduce the perturbations of the probability law and state the assumptions used throughout the paper.

Let $V\subset\R$ be a neighbourhood of $0$ and let
$\{\P_\varepsilon\}_{\varepsilon\in V}$ be probability measures on
$\Omega$ with $\P_0=\P$. Define
\[
 L_{\P_\varepsilon}\phi
 :=
 \int_\Omega L_\alpha\phi\,d\P_\varepsilon(\alpha).
\]

Fix $c\in(0,1)$, $a\in(0,1]$ and
$\beta\in(-1,-1/2)$. We assume:

\begin{itemize}
\item[(A1)]
For every $\alpha\in\Omega$, the restrictions
$f_{0,\alpha}:=f_\alpha|_{[0,c)}$ and
$f_{1,\alpha}:=f_\alpha|_{(c,1]}$ are one-to-one.

\item[(A2)]
For every $\alpha\in\Omega$,
$f_\alpha(0)=f_\alpha(1)=0$ and
\[
 f_\alpha(c)=\lim_{x\to c^-}f_\alpha(x)
 =\lim_{x\to c^+}f_\alpha(x)=a.
\]

\item[(A3)]
For every $\alpha\in\Omega$,
$f_\alpha|_{I\setminus\{c\}}\in C^3$.

\item[(A4)]
There exists $\theta>1$ such that
\[
 \inf_{\alpha\in\Omega}\inf_{x\ne c}\abs{f_\alpha'(x)}\ge\theta.
\]

\item[(A5)]
There exist constants $m_1,M_1,M_2,M_3>0$ such that, for every
$\alpha\in\Omega$ and $x\ne c$,
\[
 m_1\abs{x-c}^{\beta}
 \le
 \abs{f_\alpha'(x)}
 \le
 M_1\abs{x-c}^{\beta},
\]
\[
 \abs{f_\alpha''(x)}
 \le
 M_2\abs{x-c}^{\beta-1},
 \qquad
 \abs{f_\alpha'''(x)}
 \le
 M_3\abs{x-c}^{\beta-2}.
\]

\item[(A6)]
For $k=0,1,2,3$, with $f_\alpha^{(0)}:=f_\alpha$, the functions
$(\alpha,x)\mapsto f_\alpha^{(k)}(x)$ are continuous on
$\Omega\times(I\setminus\{c\})$. For $k=0,1,2$, the parameter derivatives
$\partial_\alpha f_\alpha^{(k)}(x)$ exist and are continuous there, the mixed derivatives commute in the form
\[
 \partial_x\partial_\alpha f_\alpha^{(k)}
 =
 \partial_\alpha f_\alpha^{(k+1)},
 \qquad k=0,1,
 \]
and there are constants $D_0,D_1,D_2>0$ such that
\begin{equation}\label{eq:parameter-bounds}
 \abs{\partial_\alpha f_\alpha^{(k)}(x)}
 \le
 D_k\abs{x-c}^{\beta-k+1},
 \qquad k=0,1,2.
\end{equation}
In particular, $\partial_\alpha f_\alpha(x)\to0$ as $x\to c$, consistently with the common cusp value $a$.

\item[(A7)]
Let $S:=\Supp\P$. The restricted skew product
\[
 T:S^{\mathbb N}\times[0,a]\longrightarrow
 S^{\mathbb N}\times[0,a],
 \qquad
 T(\omega,x)=(\sigma\omega,f_{\omega_0}(x)),
\]
is topologically mixing, where $S^{\mathbb N}$ carries the product of the relative topology on $S$.
\end{itemize}

\begin{definition}\label{def:order-one-1D}
We say that $\varepsilon\mapsto\P_\varepsilon$ is \emph{differentiable at $\varepsilon=0$} if there exists a finite signed Borel measure $\nu$ on $\Omega$ such that, for every $\varphi\in C^1(\Omega)$,
\begin{equation}\label{eq:order-one-1D}
 \left.
 \frac{d}{d\varepsilon}
 \int_\Omega\varphi(\alpha)\,d\P_\varepsilon(\alpha)
 \right|_{\varepsilon=0}
 =
 \int_\Omega\varphi'(\alpha)\,d\nu(\alpha).
\end{equation}
\end{definition}

Equivalently, for every $\varphi\in C^1(\Omega)$,
\begin{equation}\label{eq:order-one-remainder}
 \int_\Omega\varphi\,d(\P_\varepsilon-\P)
 =
 \varepsilon\int_\Omega\varphi'\,d\nu+o(\varepsilon).
\end{equation}

On $I\setminus\{c\}$ define
\begin{equation}\label{eq:A-B-def}
 A_\alpha
 :=
 -\frac{\partial_\alpha f_\alpha}{f_\alpha'},
 \qquad
 B_\alpha
 :=
 \frac{\partial_\alpha f_\alpha\,f_\alpha''}{(f_\alpha')^2}
 -
 \frac{\partial_\alpha f_\alpha'}{f_\alpha'}.
\end{equation}
The estimates below show that $A_\alpha$ and $B_\alpha$ are bounded near the cusp, so their values at $c$ are irrelevant.

We can now state the main result.

\begin{theorem}\label{thm:main-LR}
Assume \textup{(A1)--(A7)} and suppose that
$\varepsilon\mapsto\P_\varepsilon$ is differentiable at $0$. Then there exists $\varepsilon_0>0$, with $[-\varepsilon_0,\varepsilon_0]\subset V$, such that, for every
$|\varepsilon|\le\varepsilon_0$, the operator $L_{\P_\varepsilon}$ admits a stationary density
\[
 h_\varepsilon\in W^{2,1}(I),
 \qquad h_\varepsilon\ge0,
 \qquad \int_Ih_\varepsilon\,dm=1.
\]
This density is unique among stationary densities belonging to $W^{1,1}(I)$. Moreover, the map $\varepsilon\mapsto h_\varepsilon$ is differentiable at $\varepsilon=0$ in $W^{1,1}(I)$, and
\begin{equation}\label{eq:main-response}
 \left.
 \frac{d}{d\varepsilon}h_\varepsilon
 \right|_{\varepsilon=0}
 =
 (I-L_{\P})^{-1}q
 \qquad\text{in }W^{1,1}(I),
\end{equation}
where $(I-L_{\P})^{-1}$ is on $W^{1,1}_0(I)$ and
\begin{equation}\label{eq:q-theorem}
 q
 =
 \int_\Omega
 L_\alpha\bigl(A_\alpha h_0'+B_\alpha h_0\bigr)
 \,d\nu(\alpha)
 \in W^{1,1}_0(I).
\end{equation}
\end{theorem}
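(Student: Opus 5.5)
The plan is to follow the standard spectral scheme: quasi-compactness from Lasota--Yorke inequalities, Keller--Liverani stability, and a resolvent identity for the derivative, as in \cite[Proposition~6.2]{BRS}.

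\textbf{Step 1: the inverse-branch formula.} Writing $g_{i,\alpha}=f_{i,\alpha}^{-1}$ on $[0,a]$, we have $L_\alpha\phi=\sum_i \phi\circ g_{i,\alpha}\,\abs{g_{i,\alpha}'}\,\mathbf 1_{[0,a]}$. Differentiating twice, every coefficient is a combination of $g'$, $g''/g'$ and $(g''/g')'$. By (A4)--(A5) these are controlled by powers of $\abs{x-c}$ that are integrable for $\beta\in(-1,-\frac12)$.

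\textbf{Step 2: uniform Lasota--Yorke inequalities.} For the pair $(W^{1,1},L^2)$ we aim at
\[
\norm{(L_\alpha^n\phi)'}_{L^1}\le C\theta^{-n}\norm{\phi'}_{L^1}+C\norm{\phi}_{L^2},
\]
and for the pair $(W^{2,1},W^{1,1})$ the analogous bound, uniformly in $\alpha$. The most singular zero-order term in the second derivative is handled as announced in the introduction: use an $L^1$ bound on its coefficient together with $\norm{\phi}_{C^0}\le C\norm{\phi}_{W^{1,1}}$. Integrating over $\P_\varepsilon$ transfers the inequalities to $L_{\P_\varepsilon}$, uniformly in $\varepsilon$. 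The compact embeddings then give quasi-compactness on both spaces via Hennion's theorem.

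\textbf{Step 3: spectral gap and uniqueness.} Assumption (A7) shows that $1$ is a simple eigenvalue of $L_{\P}$ on $W^{1,1}$ with no other peripheral spectrum. The argument goes as follows. A $W^{1,1}$ eigenfunction is continuous, so the support of a nonnegative fixed density is an open set, forward invariant under all $f_\alpha$, $\alpha\in S$. Mixing of the skew product forces this set to cover $(0,a)$ up to endpoints, so two fixed densities must coincide. For peripheral eigenvalues $e^{it}$, apply the same argument to $\abs{\psi}$ to rule out cycles.

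\textbf{Step 4: Keller--Liverani continuity.} We need
\[
\norm{(L_{\P_\varepsilon}-L_{\P})\phi}_{L^2\text{ or }L^1}\le C\abs{\varepsilon}\norm{\phi}_{W^{1,1}}.
\]
This follows from (A6): $\alpha\mapsto L_\alpha\phi$ is Lipschitz into a weak norm, and (A6) applied to $\varphi(\alpha)=\int L_\alpha\phi\,\psi\,dm$ gives the bound. Applied to the pair $(W^{2,1},W^{1,1})$, Keller--Liverani yields for small $\varepsilon$ a simple isolated eigenvalue $1$ with eigenvector $h_\varepsilon\in W^{2,1}$. Positivity and normalization come from Lebesgue preservation, and the resolvent bounds are uniform. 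Uniqueness in $W^{1,1}$ follows from the spectral gap persisting on $W^{1,1}$ under the $(W^{1,1},L^2)$ pair, or equivalently from (A7) plus the stability of the gap.

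\textbf{Step 5: differentiability of $\alpha\mapsto L_\alpha h$.} For $h\in W^{2,1}$, the map $\alpha\mapsto L_\alpha h$ is differentiable into $W^{1,1}$ with
\[
\partial_\alpha L_\alpha h=L_\alpha(A_\alpha h'+B_\alpha h).
\]
This is obtained by differentiating the inverse-branch formula, using $\partial_\alpha g=-\partial_\alpha f/f'\circ g$. The common cusp value $a$ removes any boundary term. The derivative is continuous in $\alpha$ in $W^{1,1}$, using (A6), dominated convergence, and the bounds \eqref{eq:parameter-bounds}.

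\textbf{Step 6: the resolvent identity.} For each $\psi$ in the dual, apply Definition~\ref{def:order-one-1D} to $\varphi(\alpha)=\langle L_\alpha h_0,\psi\rangle$. This gives
\[
(L_{\P_\varepsilon}-L_{\P})h_0=\varepsilon q+o(\varepsilon)\quad\text{in }W^{1,1}.
\]
Upgrading weak to strong convergence is the main obstacle. Definition~\ref{def:order-one-1D} only tests $C^1$ functions scalarly, so I would use a Taylor remainder in $\alpha$ with values in $W^{1,1}$. This needs a $C^2$-type estimate, and to get it I would exploit $h_0\in W^{2,1}$ and possibly a smoothing step.

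With this in hand, write
\[
h_\varepsilon-h_0=(I-L_{\P_\varepsilon})^{-1}(L_{\P_\varepsilon}-L_{\P})h_0
\]
on mean-zero functions. Use uniform boundedness of $(I-L_{\P_\varepsilon})^{-1}$ on $W^{1,1}_0$, and its convergence to $(I-L_{\P})^{-1}$ when applied to the fixed vector $q$ (a Keller--Liverani consequence). This yields \eqref{eq:main-response}. Finally, $q$ has zero mean because each $L_\alpha$ preserves integrals.
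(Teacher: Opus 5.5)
Your outline follows the paper's architecture: uniform Lasota--Yorke inequalities, (A7) for simplicity of $1$, Keller--Liverani, $W^{1,1}$-valued differentiability of $\alpha\mapsto L_\alpha h$, and a resolvent identity. The step you yourself flag as the main obstacle is left open, however, and the remedy you suggest would not close it.

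\textbf{The gap in Step~6.} With $F(\alpha)=L_\alpha h_0$ you need
\[
 \frac1\varepsilon\int_\Omega F\,d(\P_\varepsilon-\P)\longrightarrow\int_\Omega F'\,d\nu\qquad\text{in }W^{1,1}(I).
\]
Definition~\ref{def:order-one-1D} gives this only after pairing with one fixed functional at a time. The $o(\varepsilon)$ remainder depends on that test function.

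A Taylor remainder in $\alpha$ does not help. The signed measure $\P_\varepsilon-\P$ is not concentrated near a base point, so there is nothing to expand around. Extra smoothness of $F$ also does not make the scalar remainders uniform. Moreover, $F\in C^2(\Omega;W^{1,1})$ is not available: it would essentially require $h_0\in W^{3,1}$ and second parameter derivatives. Assumptions (A3) and (A6) provide only $C^3$ maps and first-order parameter derivatives.

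\textbf{The missing idea is the uniform boundedness principle.} The functionals $T_\varepsilon(\varphi)=\varepsilon^{-1}\int_\Omega\varphi\,d(\P_\varepsilon-\P)$ converge pointwise on the Banach space $C^1(\Omega)$. A Baire argument, with the closed sets $\{\varphi:\abs{T_\varepsilon\varphi}\le k\ \text{for}\ 0<\abs{\varepsilon}<1/k\}$, then gives
\[
 \abs{T_\varepsilon\varphi}\le M(\norm{\varphi}_\infty+\norm{\varphi'}_\infty)\qquad\text{for all small }\varepsilon\ne0.
\]
The argument then proceeds as follows.
\begin{itemize}
\item Your Step~5 gives $F\in C^1(\Omega;W^{1,1})$. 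Approximate $F'$ uniformly by a finite-rank continuous $G$, using a partition of unity on the compact set $\Omega$.
\item Set $F_\eta(\alpha)=F(\alpha_*)+\int_{\alpha_*}^{\alpha}G$ and apply the definition coordinatewise to $F_\eta$.
\item Bound the contribution of $F-F_\eta$ by testing against norm-one functionals on $W^{1,1}$ and using the uniform bound on $T_\varepsilon$.
\end{itemize}
Only $C^1$ dependence on $\alpha$ is needed, which is exactly what Step~5 provides. No smoothing step is required.

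\textbf{Secondary issues.}
\begin{itemize}
\item \emph{Step~4.} The same uniformity is silently used here. Applying the definition to $\varphi(\alpha)=\int L_\alpha\phi\,\psi\,dm$ separately for each pair $(\phi,\psi)$ does not give $\norm{(L_{\P_\varepsilon}-L_\P)\phi}\le C\abs{\varepsilon}\norm{\phi}_{W^{1,1}}$ uniformly on the unit ball. You need the bound on $T_\varepsilon$.
\item \emph{The $L^2$ norm in Step~4.} Here $L_\alpha(A_\alpha\phi'+B_\alpha\phi)$ is not controlled by $\norm{\phi}_{W^{1,1}}$, since $\phi'$ is only in $L^1$. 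Interpolation gives only the rate $\abs{\varepsilon}^{1/2}$. The paper avoids rates altogether: weak convergence $\P_\varepsilon\to\P$, plus compactness of the $W^{1,1}$-ball in $L^2$, gives the mixed-norm convergence that Keller--Liverani needs.
\item \emph{Resolvent convergence.} $W^{1,1}$-convergence of $(I-L_{\P_\varepsilon})^{-1}q$ is not a direct output of Keller--Liverani on $(W^{1,1},L^2)$, which controls only the weak norm. The paper obtains it from the second resolvent identity and strong continuity $L_{\P_\varepsilon}v\to L_\P v$ in $W^{1,1}$ for fixed $v$.
\item \emph{Step~3.} Knowing that $\abs{\psi}$ is a multiple of $h_0$ does not by itself exclude a unimodular eigenvalue $\lambda\ne1$. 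You need the phase relation $s\circ g_{i,\alpha}=\lambda s$ to see that every $\lambda^k$ is an eigenvalue, so $\lambda$ is a root of unity. You then need uniqueness of fixed points of $L_\P^q$. That is where mixing, rather than mere transitivity, of the skew product enters.
\end{itemize}
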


\section{Proof of Theorem~\ref{thm:main-LR}}

The proof is divided into three parts. We first obtain the Lasota--Yorke inequalities and study the unperturbed annealed operator. We then prove continuity and differentiability of the annealed operators. In the last part, Keller--Liverani spectral stability gives a uniform spectral gap for the perturbed operators, and the linear response formula follows from a direct resolvent argument.

\subsection{Spectral gap for the unperturbed annealed operator}

\subsubsection{Lasota--Yorke estimates}

For $i=0,1$, let
\[
 g_{i,\alpha}:(0,a)\longrightarrow(0,c)
 \quad\text{or}\quad(c,1)
\]
be the inverse branch of $f_{i,\alpha}$. For $x\in(0,a)$,
\begin{equation}\label{eq:pointwise-L}
 (L_\alpha\phi)(x)
 =
 \sum_{i=0}^1
 \frac{\phi(g_{i,\alpha}(x))}
 {\abs{f_\alpha'(g_{i,\alpha}(x))}},
\end{equation}
and $L_\alpha\phi(x)=0$ for $x\in(a,1]$.

\begin{lemma}\label{lem:LY-W11}
There are constants $C_1>0$ and
$\lambda_1:=\theta^{-1}<1$ such that, for every $\alpha\in\Omega$ and every
$\phi\in W^{1,1}(I)$,
\begin{equation}\label{eq:LY-W11}
 \norm{L_\alpha\phi}_{W^{1,1}}
 \le
 \lambda_1\norm{\phi}_{W^{1,1}}
 +C_1\norm{\phi}_{L^2}.
\end{equation}
Moreover, the operators $L_\alpha$ are uniformly bounded on $L^2(I)$.
\end{lemma}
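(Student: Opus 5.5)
The plan is to differentiate the explicit formula \eqref{eq:pointwise-L} branch by branch. On the interior $(0,a)$ both inverse branches $g_{i,\alpha}$ are $C^3$. Each summand $\phi(g_{i,\alpha}(x))/\abs{f_\alpha'(g_{i,\alpha}(x))}$ is therefore absolutely continuous on every compact subinterval of $(0,a)$, since $\phi\in W^{1,1}(I)$ is absolutely continuous. Writing $\sigma_i=\operatorname{sign} f_\alpha'$ on the $i$-th branch and using $g_{i,\alpha}'=1/f_\alpha'(g_{i,\alpha})$, we get on $(0,a)$
\[
 (L_\alpha\phi)'
 =
 L_\alpha\Bigl(\frac{\phi'}{f_\alpha'}\Bigr)
 -
 L_\alpha\Bigl(\frac{\phi\, f_\alpha''}{(f_\alpha')^2}\Bigr).
\]
The $L^1$ contraction of $L_\alpha$ (change of variables $y=g_{i,\alpha}(x)$) then gives
\[
 \norm{(L_\alpha\phi)'}_{L^1}
 \le
 \theta^{-1}\norm{\phi'}_{L^1}
 +
 \int_I\abs{\phi}\frac{\abs{f_\alpha''}}{(f_\alpha')^2}\,dm .
\]
The first term is bounded using (A4).

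The key step is the second, zero-order term. By (A5),
\[
 \frac{\abs{f_\alpha''(y)}}{(f_\alpha'(y))^2}
 \le
 \frac{M_2}{m_1^2}\,\abs{y-c}^{-1-\beta}.
\]
Since $\beta<-1/2$, we have $2(-1-\beta)>-1$, so $\abs{y-c}^{-1-\beta}\in L^2(I)$, with a norm depending only on $\beta$ and $c$. The Cauchy--Schwarz inequality therefore bounds this term by $C\norm{\phi}_{L^2}$, uniformly in $\alpha$. This is exactly where the restriction $\beta\in(-1,-1/2)$ enters.

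For the zero-order part of the norm we use $\norm{L_\alpha\phi}_{L^1}\le\norm{\phi}_{L^1}\le\norm{\phi}_{L^2}$. Combining,
\[
 \norm{L_\alpha\phi}_{W^{1,1}}
 \le
 \theta^{-1}\norm{\phi'}_{L^1}
 +(1+C)\norm{\phi}_{L^2}
 \le
 \theta^{-1}\norm{\phi}_{W^{1,1}}
 +C_1\norm{\phi}_{L^2},
\]
with $C_1=1+C$, which is \eqref{eq:LY-W11}.

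The step I expect to need the most care is showing that $L_\alpha\phi$ really lies in $W^{1,1}(I)$, with the formula above holding as a weak derivative on all of $I$. Two issues need checking: the jump point $x=a$, where $L_\alpha\phi$ is set to $0$ on $(a,1]$, and the endpoint $x=0$. As $x\to a^-$ we have $g_{i,\alpha}(x)\to c$, where $\abs{f_\alpha'}\to\infty$ by (A5), while $\phi$ is bounded by $W^{1,1}\hookrightarrow C^0$. Hence $L_\alpha\phi(x)\to0$, which matches the value on $(a,1]$, so there is no jump. Near $x=0$ the branches are $C^1$ up to the boundary and the limit exists. Since the computed derivative is in $L^1(0,a)$, $L_\alpha\phi$ is absolutely continuous on $[0,a]$ and continuous at $a$. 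It is therefore absolutely continuous on $I$, and its weak derivative is the pointwise one, extended by $0$ on $(a,1]$. If needed, one can first argue for smooth $\phi$ and pass to the limit using the estimate just proved.

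Finally, for uniform boundedness on $L^2(I)$, use $(u+v)^2\le2(u^2+v^2)$ and change variables on each branch:
\[
 \int_0^a\abs{L_\alpha\phi}^2\,dm
 \le
 2\sum_{i=0}^1\int\frac{\phi(y)^2}{\abs{f_\alpha'(y)}}\,dy
 \le
 2\theta^{-1}\norm{\phi}_{L^2}^2,
\]
which is uniform in $\alpha$.
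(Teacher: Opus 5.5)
Your proposal is correct and follows essentially the same route as the paper. Both arguments use the branchwise derivative formula, the $L^1$ contraction with (A4) for the leading term, and Cauchy--Schwarz against the weight $|x-c|^{-1-\beta}\in L^2$ (which is exactly where $\beta<-1/2$ enters). They also share the vanishing trace at $a$ to rule out a jump, and the two-branch change of variables for the uniform $L^2$ bound; the only difference is that you derive the derivative formula and the absolute continuity yourself rather than citing \cite[Lemma~3]{BG}.
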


\begin{proof}
Fix $\alpha\in\Omega$. The proof of \cite[Lemma~3]{BG}, with
$T_\varepsilon$ replaced by $f_\alpha$, gives
\begin{equation}\label{eq:first-derivative-operator}
 (L_\alpha\phi)'
 =
 L_\alpha\left(\frac{\phi'}{f_\alpha'}\right)
 -
 L_\alpha\left(\frac{f_\alpha''}{(f_\alpha')^2}\phi\right)
 \qquad\text{on }(0,a).
\end{equation}
The branchwise expression for $L_\alpha\phi$ tends to zero as
$x\uparrow a$, because
$|f_\alpha'(y)|^{-1}=O(|y-c|^{-\beta})\to0$ as $y\to c$.
Thus extension by zero to $(a,1]$ produces no boundary term.

Since $L_\alpha$ is an $L^1$ contraction and
$|f_\alpha'|\ge\theta$,
\[
 \left\|
 L_\alpha\left(\frac{\phi'}{f_\alpha'}\right)
 \right\|_{L^1}
 \le
 \theta^{-1}\|\phi'\|_{L^1}.
\]
The only integrability condition in the lower-order term of the proof of
\cite[Lemma~3]{BG} is the uniform $L^2$ boundedness of
\[
 K_{1,\alpha}
 :=
 \frac{|f_\alpha''|}{|f_\alpha'|^2}.
\]
By \textup{(A5)},
\[
 K_{1,\alpha}(x)
 \le
 C|x-c|^{-\beta-1}.
\]
Since $\beta<-1/2$,
\[
 \sup_{\alpha\in\Omega}
 \|K_{1,\alpha}\|_{L^2}<\infty.
\]
Hence Hölder's inequality gives
\[
 \|(L_\alpha\phi)'\|_{L^1}
 \le
 \theta^{-1}\|\phi'\|_{L^1}
 +C\|\phi\|_{L^2}.
\]
Combining this estimate with the $L^1$ contraction property and
$\|\phi\|_{L^1}\le\|\phi\|_{L^2}$, we obtain
\begin{align*}
 \|L_\alpha\phi\|_{W^{1,1}}
 &\le
 \|\phi\|_{L^1}
 +\lambda_1\|\phi'\|_{L^1}
 +C\|\phi\|_{L^2}\\
 &\le
 \lambda_1\|\phi\|_{W^{1,1}}
 +(C+1-\lambda_1)\|\phi\|_{L^2}.
\end{align*}
This proves \eqref{eq:LY-W11}.

Finally, the proof of \cite[Lemma~5]{BG} applies to each $f_\alpha$.
For completeness, if $\psi_{i,\alpha}$ denotes the $i$th summand in
\eqref{eq:pointwise-L}, then
\[
 \|\psi_{i,\alpha}\|_{L^2}^2
 =
 \int_{I_i}
 \frac{|\phi(y)|^2}{|f_\alpha'(y)|}\,dy
 \le
 \theta^{-1}\|\phi\mathbf1_{I_i}\|_{L^2}^2.
\]
Since there are two branches,
\[
 \|L_\alpha\phi\|_{L^2}^2
 \le
 2\theta^{-1}\|\phi\|_{L^2}^2,
\]
uniformly in $\alpha$. The general case follows by density.
\end{proof}

\begin{lemma}\label{lem:LY-W21}
There is a constant $C_2>0$ such that, for every $\alpha\in\Omega$ and every
$\phi\in W^{2,1}(I)$,
\begin{equation}\label{eq:LY-W21}
 \norm{L_\alpha\phi}_{W^{2,1}}
 \le
 \lambda_2\norm{\phi}_{W^{2,1}}
 +C_2\norm{\phi}_{W^{1,1}},
 \qquad
 \lambda_2:=\theta^{-2}<1.
\end{equation}
\end{lemma}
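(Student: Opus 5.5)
We will differentiate the formula \eqref{eq:first-derivative-operator} once more. Apply \eqref{eq:first-derivative-operator} with $\phi$ replaced by the two functions $\phi_1:=\phi'/f_\alpha'$ and $\phi_2:=\frac{f_\alpha''}{(f_\alpha')^2}\phi$. Expanding, this gives on $(0,a)$
\[
 (L_\alpha\phi)''
 =
 L_\alpha\Bigl(\frac{\phi''}{(f_\alpha')^2}\Bigr)
 -3L_\alpha\Bigl(\frac{f_\alpha''}{(f_\alpha')^3}\phi'\Bigr)
 +L_\alpha\Bigl(\Bigl(\frac{3(f_\alpha'')^2}{(f_\alpha')^4}-\frac{f_\alpha'''}{(f_\alpha')^3}\Bigr)\phi\Bigr).
\]
This identity holds first for smooth $\phi$ and then for $\phi\in W^{2,1}$ by density. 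We then check that no boundary term appears when we extend by zero across $x=a$. The branchwise expressions for $L_\alpha\phi$ and $(L_\alpha\phi)'$ both tend to $0$ as $x\uparrow a$. For the derivative this is because the coefficients $|f_\alpha'|^{-2}=O(|y-c|^{-2\beta})$ and $|f_\alpha''|/|f_\alpha'|^3=O(|y-c|^{-2\beta-1})$ vanish at $c$, since $\beta<-1/2$, while $\phi,\phi'$ are bounded by the embedding $W^{2,1}\hookrightarrow C^1$. Hence $L_\alpha\phi\in W^{2,1}(I)$, and its weak second derivative is given by the formula above on all of $I$.

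The terms are then estimated one at a time, using the $L^1$-contraction property of $L_\alpha$.

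\textbf{Leading term.} Since $|f_\alpha'|\ge\theta$, it is bounded by $\theta^{-2}\|\phi''\|_{L^1}$.

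\textbf{First-order term.} By (A5),
\[
 \frac{|f_\alpha''|}{|f_\alpha'|^3}\le C|x-c|^{-2\beta-1},
\]
and this is bounded because $-2\beta-1>0$. So the term is at most $C\|\phi'\|_{L^1}\le C\|\phi\|_{W^{1,1}}$.

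\textbf{Zero-order term.} This is the one that needs care and is the main obstacle: it is the most singular coefficient, and it is where \cite{BG} needed square integrability, forcing $\beta<-3/4$. By (A5), its coefficient is bounded by
\[
 C|x-c|^{2\beta-2-4\beta}+C|x-c|^{\beta-2-3\beta}=C|x-c|^{-2\beta-2}.
\]
This lies in $L^1$ exactly when $-2\beta-2>-1$, i.e.\ $\beta<-1/2$, uniformly in $\alpha$. (It is not in $L^2$ unless $\beta<-3/4$.) We therefore pair the $L^1$ bound on the coefficient with $\|\phi\|_{L^\infty}\le C\|\phi\|_{W^{1,1}}$, which comes from the embedding $W^{1,1}(I)\hookrightarrow C^0(I)$. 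This bounds the zero-order term by $C\|\phi\|_{W^{1,1}}$.

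Finally we combine the pieces. Lemma~\ref{lem:LY-W11}, together with the uniform $L^2$ bound and $\|\phi\|_{L^2}\le C\|\phi\|_{W^{1,1}}$, gives $\|L_\alpha\phi\|_{W^{1,1}}\le C\|\phi\|_{W^{1,1}}$. Adding this to the bound on $\|(L_\alpha\phi)''\|_{L^1}$ yields
\[
 \|L_\alpha\phi\|_{W^{2,1}}\le\theta^{-2}\|\phi''\|_{L^1}+C_2\|\phi\|_{W^{1,1}}\le\lambda_2\|\phi\|_{W^{2,1}}+C_2\|\phi\|_{W^{1,1}},
\]
with $C_2$ depending only on $\theta$, $m_1$, $M_1$, $M_2$, $M_3$ and $\beta$. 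In particular $C_2$ is uniform in $\alpha$.
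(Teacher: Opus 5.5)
Your proposal is correct and follows essentially the same route as the paper's proof. It uses the same second-derivative formula and the same zero-trace check at $a$, which is where $\beta<-1/2$ enters through the term $f_\alpha''\phi/|f_\alpha'|^3$. It also uses the same term-by-term bounds, estimating the most singular coefficient $|x-c|^{-2\beta-2}$ in $L^1$ and pairing it with $\|\phi\|_{L^\infty}\le C\|\phi\|_{W^{1,1}}$.

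One point is worth stating explicitly. The function $\phi_2=\frac{f_\alpha''}{(f_\alpha')^2}\phi$ is unbounded near $c$ and is not in $W^{1,1}(I)$. So applying the first-derivative formula to it must be read as a classical pointwise identity on $(0,a)$ for smooth $\phi$, which is valid because the inverse branches never hit $c$ there. This is exactly how the paper proceeds before passing to $W^{2,1}(I)$ by density.
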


\begin{proof}
For a smooth function $\phi$, differentiating
\eqref{eq:first-derivative-operator} once more gives
\begin{align}
 (L_\alpha\phi)''
 ={}&
 L_\alpha\left(\frac{\phi''}{(f_\alpha')^2}\right)
 -
 3L_\alpha\left(
 \frac{f_\alpha''}{(f_\alpha')^3}\phi'
 \right)
 \notag\\
 &+
 L_\alpha\left(
 \left[
 3\frac{(f_\alpha'')^2}{(f_\alpha')^4}
 -
 \frac{f_\alpha'''}{(f_\alpha')^3}
 \right]\phi
 \right).
 \label{eq:second-derivative-operator}
\end{align}
This is the same formula used in the proof of
\cite[Lemma~4]{BG}.

We first verify the boundary traces. Let $y\to c$. Since
$\phi,\phi'\in L^\infty(I)$,
\[
 \frac{\phi(y)}{\abs{f_\alpha'(y)}}
 =O(\abs{y-c}^{-\beta})\longrightarrow0,
\]
while the two terms in the first derivative satisfy
\[
 \frac{\abs{\phi'(y)}}{\abs{f_\alpha'(y)}^2}
 =O(\abs{y-c}^{-2\beta})\longrightarrow0,
 \qquad
 \frac{\abs{f_\alpha''(y)\phi(y)}}
 {\abs{f_\alpha'(y)}^3}
 =O(\abs{y-c}^{-2\beta-1})\longrightarrow0.
\]
The last limit uses $\beta<-1/2$. Hence both $L_\alpha\phi$ and
$(L_\alpha\phi)'$ have zero trace at $a$, and extension by zero to
$(a,1]$ produces no Dirac boundary term in either the first or the second weak derivative.

Using the $L^1$ contraction property and $\abs{f_\alpha'}\ge\theta$, the leading term in \eqref{eq:second-derivative-operator} satisfies
\[
 \left\|
 L_\alpha\left(\frac{\phi''}{(f_\alpha')^2}\right)
 \right\|_{L^1}
 \le
 \theta^{-2}\norm{\phi''}_{L^1}.
\]
Moreover,
\[
 \frac{\abs{f_\alpha''(x)}}{\abs{f_\alpha'(x)}^3}
 \le
 C\abs{x-c}^{-2\beta-1}.
\]
Since $-2\beta-1>0$, this coefficient is uniformly bounded, and hence
\[
 \left\|
 \frac{f_\alpha''}{(f_\alpha')^3}\phi'
 \right\|_{L^1}
 \le C\norm{\phi'}_{L^1}.
\]

For the zero-order term, put
\[
 K_{3,\alpha}
 :=
 3\frac{\abs{f_\alpha''}^2}{\abs{f_\alpha'}^4}
 +
 \frac{\abs{f_\alpha'''}}{\abs{f_\alpha'}^3}.
\]
By \textup{(A5)},
\[
 K_{3,\alpha}(x)
 \le
 C\abs{x-c}^{-2\beta-2}.
\]
The condition $\beta<-1/2$ is exactly what is needed for this weight to belong to $L^1(I)$, uniformly in $\alpha$. Thus, using
$W^{1,1}(I)\hookrightarrow L^\infty(I)$,
\[
 \norm{K_{3,\alpha}\phi}_{L^1}
 \le
 \norm{K_{3,\alpha}}_{L^1}\norm{\phi}_{L^\infty}
 \le
 C\norm{\phi}_{W^{1,1}}.
\]
This is the only point where we modify the proof of
\cite[Lemma~4]{BG}: the most singular coefficient is estimated in
$L^1$, rather than in $L^2$.

Consequently,
\begin{equation}\label{eq:second-order-seminorm}
 \norm{(L_\alpha\phi)''}_{L^1}
 \le
 \theta^{-2}\norm{\phi''}_{L^1}
 +C\norm{\phi}_{W^{1,1}}.
\end{equation}
Combining \eqref{eq:second-order-seminorm} with the boundedness of
$L_\alpha$ on $W^{1,1}(I)$ gives
\begin{align*}
 \norm{L_\alpha\phi}_{W^{2,1}} \le
 \lambda_2\norm{\phi''}_{L^1}
 +C\norm{\phi}_{W^{1,1}}\le
 \lambda_2\norm{\phi}_{W^{2,1}}
 +C_2\norm{\phi}_{W^{1,1}}.
\end{align*}
The general case follows by density of smooth functions in $W^{2,1}(I)$.
\end{proof}

\begin{lemma}\label{lem:annealed-LY}
For every $\varepsilon\in V$,
\begin{align}
 \norm{L_{\P_\varepsilon}\phi}_{W^{1,1}}
 &\le
 \lambda_1\norm{\phi}_{W^{1,1}}
 +C_1\norm{\phi}_{L^2},
 \label{eq:annealed-W11}\\
 \norm{L_{\P_\varepsilon}\psi}_{W^{2,1}}
 &\le
 \lambda_2\norm{\psi}_{W^{2,1}}
 +C_2\norm{\psi}_{W^{1,1}}.
 \label{eq:annealed-W21}
\end{align}
The constants are independent of $\varepsilon$.
\end{lemma}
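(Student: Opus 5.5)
The plan is to average the single-map inequalities of Lemma~\ref{lem:LY-W11} and Lemma~\ref{lem:LY-W21} against $\P_\varepsilon$. For fixed $\varepsilon\in V$ and $\phi\in W^{1,1}(I)$, we need to know that the Bochner integral $L_{\P_\varepsilon}\phi=\int_\Omega L_\alpha\phi\,d\P_\varepsilon(\alpha)$ makes sense as a $W^{1,1}(I)$-valued integral, and that its weak derivatives are the $\P_\varepsilon$-averages of the weak derivatives of $L_\alpha\phi$. Once this is done, Minkowski's integral inequality gives
\[
 \norm{L_{\P_\varepsilon}\phi}_{W^{1,1}}
 \le
 \int_\Omega\norm{L_\alpha\phi}_{W^{1,1}}\,d\P_\varepsilon(\alpha)
 \le
 \lambda_1\norm{\phi}_{W^{1,1}}+C_1\norm{\phi}_{L^2}.
\]
The second inequality uses that $\P_\varepsilon$ is a probability measure and that $\lambda_1,C_1$ do not depend on $\alpha$. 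The same argument with Lemma~\ref{lem:LY-W21} yields \eqref{eq:annealed-W21}. In both cases the constants are the $\alpha$-uniform ones, so they are independent of $\varepsilon$.

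The only real work is measurability and integrability. First, by the first-derivative formula \eqref{eq:first-derivative-operator}, the function $(\alpha,x)\mapsto (L_\alpha\phi)(x)$ is jointly measurable. The same holds for $(L_\alpha\phi)'(x)$, and for $(L_\alpha\phi)''(x)$ when $\phi\in W^{2,1}(I)$. This follows from \eqref{eq:pointwise-L}: the inverse branches $g_{i,\alpha}(x)$ depend continuously on $(\alpha,x)\in\Omega\times(0,a)$ by (A6) and the implicit function theorem, and $f_\alpha^{(k)}$ is jointly continuous off $c$.

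Second, $\sup_\alpha\norm{L_\alpha\phi}_{W^{k,1}}<\infty$ by the lemmas. Together with the first point, Fubini's theorem applies. For any test function $\psi\in C_c^\infty(0,1)$ it gives
\[
 \int_I\Bigl(\int_\Omega L_\alpha\phi\,d\P_\varepsilon\Bigr)\psi'\,dm
 =-\int_I\Bigl(\int_\Omega(L_\alpha\phi)'\,d\P_\varepsilon\Bigr)\psi\,dm,
\]
so the weak derivative of the average is the average of the derivatives. Then
\[
 \norm{(L_{\P_\varepsilon}\phi)^{(j)}}_{L^1}\le\int_\Omega\norm{(L_\alpha\phi)^{(j)}}_{L^1}\,d\P_\varepsilon
\]
by Tonelli's theorem, for $j=0,1,2$ as appropriate. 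Summing over $j$ gives the Minkowski bound above without needing Bochner measurability explicitly.

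I expect the main obstacle to be the joint measurability in $(\alpha,x)$. It is routine given (A6), but requires checking that the inverse branches vary continuously in $\alpha$; this holds because $f_{i,\alpha}$ is strictly monotone with $\abs{f'_\alpha}\ge\theta$, by (A4). One can also sidestep it by a density argument: prove the estimate for smooth $\phi$, where everything is continuous in $\alpha$, and extend using the uniform boundedness of $L_{\P_\varepsilon}$ on $W^{k,1}(I)$.
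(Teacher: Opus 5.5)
Your proposal is correct and follows the paper's argument: you average the $\alpha$-uniform single-map Lasota--Yorke inequalities of Lemmas~\ref{lem:LY-W11} and~\ref{lem:LY-W21} against the probability measure $\P_\varepsilon$ and apply Minkowski's inequality, so the constants cannot depend on $\varepsilon$. The only difference is in the measurability step. The paper uses a Bochner integral, with strong measurability for smooth $\phi$ extended by density. You instead use joint measurability in $(\alpha,x)$ together with Fubini--Tonelli, which equally justifies interchanging the weak derivatives with the $\P_\varepsilon$-average.
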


\begin{proof}
For smooth functions, the branch formulas and Assumption~\textup{(A6)} imply strong measurability of $\alpha\mapsto L_\alpha\phi$ in the relevant Sobolev space. The general case follows by density and the uniform estimates in Lemmas~\ref{lem:LY-W11} and \ref{lem:LY-W21}; these estimates also give Bochner integrability. Hence, by Minkowski's inequality,
\begin{align*}
 \norm{L_{\P_\varepsilon}\phi}_{W^{1,1}} \le \int_\Omega\norm{L_\alpha\phi}_{W^{1,1}}\,d\P_\varepsilon(\alpha)\le
 \lambda_1\norm{\phi}_{W^{1,1}}
 +C_1\norm{\phi}_{L^2}.
\end{align*}
The second estimate follows in the same way from
Lemma~\ref{lem:LY-W21}.
\end{proof}

\subsubsection{The unperturbed annealed operator}

\begin{lemma}\label{lem:open-set-positivity}
Let $U,V\subset[0,a]$ be nonempty relatively open intervals. Then there exists $n_0\ge1$ such that
\begin{equation}\label{eq:open-set-positivity}
 \int_V L_{\P}^{n}\mathbf1_U\,dm>0
 \qquad
 \text{for every }n\ge n_0.
\end{equation}
\end{lemma}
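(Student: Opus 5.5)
The plan is to convert topological mixing of the restricted skew product (A7) into a statement about $n$-fold random compositions, and then to integrate over the law. First, unfold $L_{\P}^n$ by duality:
\[
 \int_V L_{\P}^n\mathbf1_U\,dm
 =\int_{\Omega^n}\int_I \mathbf1_U\,(\mathbf1_V\circ f_{\alpha_{n-1}}\circ\cdots\circ f_{\alpha_0})\,dm\,d\P^{n}(\alpha_0,\dots,\alpha_{n-1})
 =\int_{\Omega^n} m\bigl(U\cap f_{\alpha}^{-n}V\bigr)\,d\P^n(\alpha),
\]
where $f_\alpha^{n}:=f_{\alpha_{n-1}}\circ\cdots\circ f_{\alpha_0}$. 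The integrand is nonnegative, so it is enough to show that for every $n\ge n_0$ the set of $(\alpha_0,\dots,\alpha_{n-1})\in S^n$ with $m(U\cap f_\alpha^{-n}V)>0$ contains a nonempty relatively open subset of $S^n$. Such a set has positive $\P^n$-measure, because every nonempty relatively open subset of $S=\Supp\P$ has positive $\P$-measure.

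Next, apply (A7) to the open sets $S^{\mathbb N}\times U$ and $S^{\mathbb N}\times V^\circ$; if $V$ contains $a$ or $0$, pass to a smaller relatively open interval. Topological mixing gives $n_0$ such that $T^n(S^{\mathbb N}\times U)\cap(S^{\mathbb N}\times V)\neq\emptyset$ for all $n\ge n_0$. So for each $n\ge n_0$ there are $\omega\in S^{\mathbb N}$ and $x\in U$ with $f^n_{\omega}(x)\in V$. Shrinking $U$ and $V$ to open subintervals with $x\in\mathrm{int}\,U\setminus\{c\}$ only makes the conclusion stronger. One must also make sure that the orbit points $f^j_\omega(x)$, $j<n$, avoid $c$, or handle the case where they hit it. This is the delicate point, addressed below.

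With the orbit avoiding $c$, use (A6): $(\alpha,y)\mapsto f_\alpha(y)$ is jointly continuous on $\Omega\times(I\setminus\{c\})$. In fact it is continuous on all of $\Omega\times I$, since $f_\alpha(c)=a$ for every $\alpha$ and $|\partial_\alpha f_\alpha|\le D_0|x-c|^{\beta+1}\to 0$ by \eqref{eq:parameter-bounds}, so the common cusp value gives continuity at $c$ as well. It follows that $(\alpha_0,\dots,\alpha_{n-1},y)\mapsto f^n_\alpha(y)$ is continuous on $\Omega^n\times I$. Hence there are a neighbourhood $N$ of $(\omega_0,\dots,\omega_{n-1})$ in $S^n$ and an interval $J\ni x$ in $U$ with $f^n_\alpha(J)\subset V$ for all $\alpha\in N$. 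Then $m(U\cap f^{-n}_\alpha V)\ge m(J)>0$ on $N$, and $\P^n(N)>0$, which gives \eqref{eq:open-set-positivity}. Continuity at $c$ also means the orbit hitting $c$ is harmless, so the worry from the previous paragraph disappears.

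The main obstacle is the interplay between the product topology on $S^{\mathbb N}$ and the positive-measure requirement. Mixing is topological, and only open sets in $S^n$ are guaranteed positive $\P^n$-mass, so one needs $f^n_\alpha(y)$ to be continuous in the parameters. This is exactly where the common cusp value $a$ and (A6) enter. A minor issue is that $V\subset[0,a]$ is only relatively open, but it still has nonempty interior in $I$, so $m(J)>0$ and the pullback $f_\alpha^{-n}V$ contains an open set.
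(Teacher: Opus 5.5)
Your proof is correct and has the same skeleton as the paper's proof. Both use the duality identity \eqref{eq:hitting-identity}, one mixing orbit from (A7) for each $n\ge n_0$, stability of that orbit when $(\alpha_0,\dots,\alpha_{n-1})$ is perturbed, and positivity of $\P^n$ on nonempty relatively open subsets of $S^n$. The only real difference is how the cusp is handled.

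The paper uses parameter continuity only away from $c$. It picks a point of $U\cap(f_{\underline\alpha}^{\,n})^{-1}(V)$ outside the finite set $E_{\underline\alpha,n}$ of points whose orbit meets $c$. It then takes a compact interval $J$ whose first $n$ images stay a positive distance from $c$, and perturbs the parameters along that orbit.

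You instead show that $(\alpha,y)\mapsto f_\alpha(y)$ is jointly continuous on all of $\Omega\times I$. Then the $n$-fold composition is continuous on $\Omega^n\times I$, and the product neighbourhood $N\times J$ comes out in one step, with no exceptional set. Your justification of continuity at $(\alpha_*,c)$ should be written out. For $y\ne c$, the mean value theorem in $\alpha$ together with \eqref{eq:parameter-bounds} gives $\abs{f_\alpha(y)-a}\le D_0\abs{\alpha-\alpha_*}\abs{y-c}^{\beta+1}+\abs{f_{\alpha_*}(y)-a}$, and the last term tends to $0$ by (A2). Alternatively, (A5) gives directly $\abs{f_\alpha(y)-a}\le \frac{M_1}{\beta+1}\abs{y-c}^{\beta+1}$, uniformly in $\alpha$.

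Your version is shorter and makes explicit where the common cusp value enters. The paper's version needs only continuity of each single map at $c$, so it does not rely on uniform control of the family near the cusp.

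Your preliminary shrinking of $U$ and $V$ is unnecessary. Both sets are relatively open in $[0,a]$, and every $f_\alpha$ maps $I$ into $[0,a]$. Hence $S^{\mathbb N}\times U$ and $S^{\mathbb N}\times V$ are already open in the space of (A7), and the preimage of $V$ under the continuous composition is already relatively open.
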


\begin{proof}
Let $S:=\Supp\P$. For $n\ge1$ and
$\underline\alpha=(\alpha_0,\ldots,\alpha_{n-1})\in S^n$, write
\[
 f_{\underline\alpha}^{\,k}
 :=
 f_{\alpha_{k-1}}\circ\cdots\circ f_{\alpha_0},
 \qquad 1\le k\le n,
\]
and let $f_{\underline\alpha}^{\,0}$ be the identity. Iterating the annealed operator and using transfer-operator duality gives
\begin{equation}\label{eq:hitting-identity}
 \int_V L_{\P}^{n}\mathbf1_U\,dm
 =
 \int_{S^n}
 m\left(U\cap(f_{\underline\alpha}^{\,n})^{-1}(V)\right)
 \,d\P^n(\underline\alpha).
\end{equation}

By Assumption~\textup{(A7)}, there exists $n_0=n_0(U,V)$ such that, for every $n\ge n_0$,
\[
 T^n(S^{\mathbb N}\times U)
 \cap(S^{\mathbb N}\times V)\ne\varnothing.
\]
Fix such an $n$. Then there exist
$\omega=(\omega_0,\omega_1,\ldots)\in S^{\mathbb N}$ and $x\in U$ such that
$f_{\omega_{n-1}}\circ\cdots\circ f_{\omega_0}(x)\in V$.
Put $\underline\alpha=(\omega_0,\ldots,\omega_{n-1})$.

The finite set
\[
 E_{\underline\alpha,n}
 :=
 \bigcup_{k=0}^{n-1}
 (f_{\underline\alpha}^{\,k})^{-1}(\{c\})
\]
contains all points whose first $n$ iterates meet the cusp. Since the maps have continuous extensions at $c$,
$U\cap(f_{\underline\alpha}^{\,n})^{-1}(V)$ is a nonempty open set. We may therefore choose
\[
 x_0\in
 U\cap(f_{\underline\alpha}^{\,n})^{-1}(V)
 \setminus E_{\underline\alpha,n}.
\]
There is a non-degenerate closed interval $J\subset U$, containing $x_0$ in its interior, such that
\begin{equation}\label{eq:J-properties}
 f_{\underline\alpha}^{\,n}(J)\subset V,
 \qquad
 \operatorname{dist}\left(f_{\underline\alpha}^{\,k}(J),\{c\}\right)>0,
 \quad 0\le k\le n-1.
\end{equation}

Assumption~\textup{(A6)} gives locally uniform parameter continuity away from the cusp. Applying it successively along the finite orbit of $J$, we obtain relatively open neighbourhoods
$O_j\subset S$ of $\alpha_j$ such that, for every
$\underline\gamma=(\gamma_0,\ldots,\gamma_{n-1})\in
O_0\times\cdots\times O_{n-1}$, all the intermediate images of $J$ remain a positive distance from $c$ and $f_{\underline\gamma}^{\,n}(J)\subset V$.
Consequently,
\[
 m\left(U\cap(f_{\underline\gamma}^{\,n})^{-1}(V)\right)
 \ge m(J)>0.
\]
Since every nonempty relatively open subset of $S=\Supp\P$ has positive $\P$-measure,
\[
 \P^n(O_0\times\cdots\times O_{n-1})>0.
\]
The conclusion now follows from \eqref{eq:hitting-identity}.
\end{proof}

\begin{lemma}\label{lem:peripheral-cyclicity}
Suppose that, on the complexification of $W^{1,1}(I)$,
\[
 L_{\P}u=\lambda u,
 \qquad
 u\ne0,
 \qquad
 |\lambda|=1.
\]
Then $\lambda^k$ is an eigenvalue of $L_{\P}$ for every integer
$k\ge1$.
\end{lemma}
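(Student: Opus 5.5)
The plan is to use the classical argument for positive operators that commute with ``multiplication by a unimodular cocycle''. For a random composition, however, the phase must not depend on the parameter. The steps are as follows.

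\textbf{Step 1: reduce to $|u|$.} Since $L_\alpha$ is a positive operator, $|L_{\P}u|\le L_{\P}|u|$ pointwise. Here $|u|\in W^{1,1}(I)$ and is continuous, because $W^{1,1}(I)\hookrightarrow C^0(I)$. From $L_{\P}u=\lambda u$ with $|\lambda|=1$ we get
\[
|u|=|L_{\P}u|\le L_{\P}|u|.
\]
Integrating and using that $L_{\P}$ preserves integrals gives $\int(L_{\P}|u|-|u|)\,dm=0$. The integrand is nonnegative, so $L_{\P}|u|=|u|$ a.e., and hence everywhere by continuity on $(0,a)$. Also $u=\lambda^{-1}L_{\P}u$ vanishes on $(a,1]$, so $u$ is supported in $[0,a]$.

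\textbf{Step 2: phase invariance.} Write $u=|u|\,\xi$ with $|\xi|=1$ on the open set $G=\{|u|>0\}$. Equality in $|L_{\P}u|\le L_{\P}|u|$ is equality in the triangle inequality for the integral over $\alpha$ and the two branches in \eqref{eq:pointwise-L}. Concretely, for each $x\in(0,a)$,
\[
\lambda\,\xi(x)\,|u(x)|=\int_\Omega\sum_i \frac{|u|(g_{i,\alpha}x)\,\xi(g_{i,\alpha}x)}{|f'_\alpha(g_{i,\alpha}x)|}\,d\P(\alpha),
\]
and the modulus of the right-hand side equals the integral of the moduli. This forces
\[
\xi(g_{i,\alpha}x)=\lambda\xi(x)
\]
for $\P$-a.e.\ $\alpha$ and every branch $i$ with $|u|(g_{i,\alpha}x)>0$. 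I would then upgrade ``a.e.\ $\alpha$'' to ``all $\alpha\in S$'' using continuity of $(\alpha,x)\mapsto g_{i,\alpha}x$, which follows from (A6). Equivalently, this says $\xi(f_\alpha y)=\lambda^{-1}\xi(y)$ wherever $y\in G$ and $f_\alpha y\in G$.

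\textbf{Step 3: powers.} Set $u_k:=|u|\,\xi^{k}$. The functional equation gives, branchwise,
\[
L_{\P}u_k=\lambda^k u_k.
\]
Indeed, each term in the branch sum picks up the factor $\xi^k(g_{i,\alpha}x)=\lambda^k\xi^k(x)$, and the terms with $|u|(g_{i,\alpha}x)=0$ vanish. Moreover $u_k\neq0$, so $\lambda^k$ is an eigenvalue.

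The main obstacle is regularity: showing that $u_k$ belongs to the complexified $W^{1,1}(I)$, so that $\lambda^k$ is an eigenvalue \emph{in the space}. I would argue that $u_k=u^k/|u|^{k-1}$ on $G$. This is locally $W^{1,1}$ on $G$, since $u$ is continuous and bounded away from zero on compacts of $G$. The delicate point is the behaviour near $\partial G$.

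First I would try to prove $G\supset(0,a)$. Take a zero $x_0\in(0,a)$ of $|u|$. From $|u|=L_{\P}^n|u|$ and Lemma~\ref{lem:open-set-positivity}, applied with $U$ a neighbourhood in $\{|u|>0\}$ and $V$ a small neighbourhood of $x_0$, we get $\int_V|u|>0$ for all small $V$. Upgrading this to strict positivity at $x_0$ itself would use the pointwise version of the hitting argument in that proof: some $f^n_{\underline\gamma}(J)$ covers $x_0$ for an open set of $\underline\gamma$, and the Jacobian weights are positive there. Then
\[
L^n_{\P}|u|(x_0)\ge\int_{O}\frac{|u|(\cdot)}{|(f^n_{\underline\gamma})'|}\,d\P^n>0.
\]

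Once $|u|>0$ on $(0,a)$, the remaining issue is the endpoints $0$ and $a$. There I would use the alternative formula
\[
u_k=\lambda^{-k}L_{\P}u_k,
\]
with the right-hand side interpreted through the Lasota--Yorke bound of Lemma~\ref{lem:LY-W11}. I would approximate $u_k$ by truncations $u_k\chi_\delta$, with $\chi_\delta$ cutoffs away from $\{0,a\}$, and use $|u_k|=|u|$ together with $\|(u_k)'\|_{L^1(K)}\le k\|u'\|_{L^1(K)}\cdot(\text{ratio bounds})$. The aim is uniform $W^{1,1}$ bounds on these truncations and then closure of the graph, giving $u_k\in W^{1,1}$.
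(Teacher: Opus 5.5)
Your Steps 1--3 are the paper's argument: invariance of $|u|$, then equality in the triangle inequality giving $\xi(g_{i,\alpha}x)=\lambda\xi(x)$ for $\P$-a.e.\ $\alpha$ and every branch with $|u|(g_{i,\alpha}x)>0$, then $L_{\P}u_k=\lambda^k u_k$ branchwise. The upgrade to all $\alpha\in S$ is unnecessary, since everything is integrated against $\P$. At points with $|u|(x)=0$ both sides vanish, because $|L_{\P}u_k|\le L_{\P}|u|=|u|$.

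The gap is in the step you flag as the main obstacle, $u_k\in W^{1,1}(I)$. There you give a plan rather than a proof, and its pieces do not hold up.
\begin{itemize}
\item Strict positivity $|u|(x_0)>0$ rests on the claim that $f^n_{\underline\gamma}(J)$ contains the prescribed point $x_0$ for a $\P^n$-positive set of $\underline\gamma$. Lemma~\ref{lem:open-set-positivity} only gives $f^n_{\underline\gamma}(J)\subset V$. The covering statement is not proved, and using it would make a structural lemma depend on (A7).
\item Even granting positivity, you cannot avoid the boundary. For bounded $\phi$, every $L_\alpha\phi(x)$ tends to $0$ as $x\uparrow a$, because $|g_{i,\alpha}'(x)|\to0$. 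So $u(a)=0$, and even with $|u|>0$ on $(0,a)$ you must control $u_k'$ up to a point where $|u|\to0$.
\item Your endpoint argument does not do this. Lemma~\ref{lem:LY-W11} applies only to functions already in $W^{1,1}(I)$, so invoking it through $u_k=\lambda^{-k}L_{\P}u_k$ presupposes the conclusion. For the truncations $u_k\chi_\delta$, the uniform $W^{1,1}$ bound you need is exactly what has to be proved. The ``closure of the graph'' step is not specified.
\end{itemize}

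The missing observation is that no lower bound on $|u|$ is needed. The polar map $\Phi_k(z)=|z|(z/|z|)^k$, $\Phi_k(0)=0$ (that is, $re^{i\theta}\mapsto re^{ik\theta}$), is globally Lipschitz on $\mathbb C$. Since $u\in W^{1,1}(I)$ is absolutely continuous, $u_k=\Phi_k\circ u$ is absolutely continuous on all of $I$, hence in $W^{1,1}(I)$.

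Quantitatively, on $G$ one has $u_k'=\xi^{k-1}\bigl(|u|'\xi+k|u|\xi'\bigr)$ and $\operatorname{Re}(\bar\xi\xi')=0$. Hence $|u_k'|^2=(|u|')^2+k^2|u|^2|\xi'|^2\le k^2|u'|^2$, and $u_k=0$ off $G$.

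This is how the paper closes the proof: the Sobolev chain rule for the Lipschitz map $\Phi_k$. It makes your detour through positivity of $|u|$ on $(0,a)$, and the endpoint analysis, unnecessary.
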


\begin{proof}
This is the annealed version of the fully cyclic argument from
\cite[Proposition~3.5]{Baladi}. We give the short
argument needed here.

Put
\[
 h:=|u|,
 \qquad
 s(x):=
 \begin{cases}
 \dfrac{u(x)}{|u(x)|},&u(x)\ne0,\\[1ex]
 0,&u(x)=0.
 \end{cases}
\]
Positivity gives
\[
 |u|
 =
 |L_{\P}u|
 \le
 L_{\P}|u|.
\]
Since $L_{\P}$ preserves integrals, the two sides have the same
integral, and hence
\begin{equation}\label{eq:peripheral-modulus-fixed}
 L_{\P}h=h.
\end{equation}

For almost every $x\in(0,a)$, the inverse-branch representation gives
\begin{align*}
 h(x)
 &=
 \sum_{i=0}^1
 \int_\Omega
 h(g_{i,\alpha}(x))|g_{i,\alpha}'(x)|
 \,d\P(\alpha),\\
 \lambda h(x)s(x)
 &=
 \sum_{i=0}^1
 \int_\Omega
 h(g_{i,\alpha}(x))s(g_{i,\alpha}(x))
 |g_{i,\alpha}'(x)|
 \,d\P(\alpha).
\end{align*}

If $h(x)>0$, division by the first identity shows that
$\lambda s(x)$ is a convex average of complex numbers of modulus one,
namely the values $s(g_{i,\alpha}(x))$ corresponding to non-zero
weights. 

Since $|\lambda s(x)|=1$, the equality case in the triangle inequality implies
\begin{equation}\label{eq:cyclic-phase}
 s(g_{i,\alpha}(x))
 =
 \lambda s(x)
\end{equation}
for $\P$-almost every $\alpha$ and every branch for which
$h(g_{i,\alpha}(x))>0$.

If $h(x)=0$, then \eqref{eq:peripheral-modulus-fixed} and the
non-negativity of the inverse-branch weights imply
\[
 h(g_{i,\alpha}(x))=0
\]
for $\P$-almost every $\alpha$ and both branches.

For $k\ge1$, define
\[
 u_k(x):=
 \begin{cases}
 |u(x)|
 \left(\dfrac{u(x)}{|u(x)|}\right)^k,
 &u(x)\ne0,\\[2ex]
 0,&u(x)=0.
 \end{cases}
\]
The polar map
\[
 \Phi_k(z):=
 \begin{cases}
 |z|\left(z/|z|\right)^k,&z\ne0,\\
 0,&z=0,
 \end{cases}
\]
is globally Lipschitz on $\mathbb C$; for instance,
\[
 |\Phi_k(z)-\Phi_k(w)|
 \le
 (2k+1)|z-w|.
\]
The Sobolev chain rule therefore gives
\[
 u_k=\Phi_k\circ u\in W^{1,1}(I).
\]

Using \eqref{eq:cyclic-phase}, for almost every $x$ with $h(x)>0$ we
obtain
\begin{align*}
 L_{\P}u_k(x)
 &=
 \sum_{i=0}^1
 \int_\Omega
 h(g_{i,\alpha}(x))s(g_{i,\alpha}(x))^k
 |g_{i,\alpha}'(x)|
 \,d\P(\alpha)\\
 &=
 \lambda^k s(x)^k
 \sum_{i=0}^1
 \int_\Omega
 h(g_{i,\alpha}(x))|g_{i,\alpha}'(x)|
 \,d\P(\alpha)\\
 &=
 \lambda^k h(x)s(x)^k
 =
 \lambda^k u_k(x).
\end{align*}
If $h(x)=0$, both sides vanish by the observation above. Thus
\[
 L_{\P}u_k=\lambda^ku_k.
\]
Since $|u_k|=h\ne0$, the number $\lambda^k$ is an eigenvalue of
$L_{\P}$ for every $k\ge1$.
\end{proof}

\begin{proposition}\label{prop:unperturbed-system}
The operator $L_{\P}$ admits a unique stationary density $h_0\in W^{2,1}(I)$ among stationary densities belonging to $W^{1,1}(I)$. Moreover,
$L_{\P}$ has a spectral gap when acting on both $W^{1,1}(I)$ and
$W^{2,1}(I)$. In particular, there exists $C>0$ such that
\begin{equation}\label{eq:unperturbed-resolvent}
 \left\|(I-L_{\P})^{-1}\right\|_{W^{1,1}_0\to W^{1,1}}
 \le C.
\end{equation}
\end{proposition}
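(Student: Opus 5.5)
The plan is to combine the Lasota--Yorke inequalities of Lemma~\ref{lem:annealed-LY} (at $\varepsilon=0$) with the Hennion--Ionescu-Tulcea--Marinescu theorem, and then to identify the peripheral spectrum using Lemmas~\ref{lem:open-set-positivity} and~\ref{lem:peripheral-cyclicity}.

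\emph{Step 1: quasi-compactness.} Since $W^{1,1}(I)\Subset L^2(I)$, $W^{2,1}(I)\Subset W^{1,1}(I)$, and $L_{\P}$ is bounded on $L^2$ (Lemma~\ref{lem:LY-W11}), \eqref{eq:annealed-W11} and \eqref{eq:annealed-W21} make $L_{\P}$ quasi-compact on $W^{1,1}(I)$ and on $W^{2,1}(I)$, with essential spectral radius at most $\theta^{-1}$ and $\theta^{-2}$ respectively. Since $L_{\P}$ preserves integrals, $1$ lies in the spectrum. Iterating the inequalities shows that $\sup_n\norm{L_{\P}^n}_{W^{1,1}}<\infty$: the $L^2$ norm of $L_{\P}^n\phi$ is dominated by $\norm{\phi}_{W^{1,1}}$ through the embedding, and the $L^1$ contraction property handles the low norm. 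Hence the spectral radius is $1$ and there are no Jordan blocks on the unit circle.

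\emph{Step 2: existence and regularity of $h_0$.} Take Cesàro averages of $L_{\P}^n\mathbf 1$ in $W^{2,1}$. By the $W^{2,1}$ Lasota--Yorke inequality these averages are bounded, and by compactness they have a limit in $W^{1,1}$. That limit $h_0$ is nonnegative, has integral $1$, and satisfies $L_{\P}h_0=h_0$. Lower semicontinuity of the $W^{2,1}$ norm under $W^{1,1}$ convergence (the second derivatives converge weakly-$*$ as measures, and the bound extends) places $h_0$ in $W^{2,1}$. Equivalently, the spectral projection onto the eigenvalue $1$ computed in $W^{1,1}$ has range contained in $W^{2,1}$, since eigenvectors of the quasi-compact operator on $W^{2,1}$ at eigenvalues outside the essential radius coincide with those on $W^{1,1}$; this follows from the standard argument iterating \eqref{eq:annealed-W21} on an eigenvector.

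\emph{Step 3: peripheral spectrum and uniqueness (main obstacle).} Suppose $L_{\P}u=\lambda u$ with $u\in W^{1,1}$, $|\lambda|=1$, $u\ne0$.
\begin{itemize}
\item First I show that $h=|u|$ is a fixed density that is continuous and positive on $(0,a)$. Continuity comes from $W^{1,1}\hookrightarrow C^0$, and the support lies in $[0,a]$ because $L_{\P}$ maps into functions vanishing on $(a,1]$. Pick an open interval $U$ with $h\ge\delta\mathbf 1_U$. Then $h=L_{\P}^n h\ge\delta L_{\P}^n\mathbf 1_U$, and Lemma~\ref{lem:open-set-positivity} gives $\int_V h>0$ for every open $V\subset[0,a]$. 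Hence $\{h>0\}$ is open and dense in $[0,a]$.
\item Uniqueness: if $h_1,h_2$ are two stationary densities in $W^{1,1}$, then $h_1-h_2$ is a real eigenvector. Its positive and negative parts are again fixed, which is the same modulus argument with $\lambda=1$. They have disjoint open supports, each of which must be dense by the previous point; this is a contradiction unless one of them vanishes. So $h_1=h_2$.
\item No other peripheral eigenvalues: by Lemma~\ref{lem:peripheral-cyclicity}, $\lambda^k$ is an eigenvalue for every $k$. Quasi-compactness leaves only finitely many peripheral eigenvalues, so $\lambda$ is a root of unity, say $\lambda^p=1$. The case $p>1$ is the hard point. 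I would use the phase relation \eqref{eq:cyclic-phase}: the continuous phase $s$ is constant along random orbits up to multiplication by $\lambda$. Fix a small interval $U$ where $s$ is close to a constant $s_0$. Lemma~\ref{lem:open-set-positivity}, applied with the two consecutive times $n$ and $n+1$ (both at least $n_0$), produces points of $U$ mapped back into $U$ in both $n$ and $n+1$ steps for a set of parameters of positive $\P^n$-measure. This forces $\lambda^n s_0\approx s_0$ and $\lambda^{n+1}s_0\approx s_0$, hence $\lambda=1$. The delicate part is making \eqref{eq:cyclic-phase} hold pointwise, not merely almost everywhere, along the chosen orbits. Continuity of $u$ and of $g_{i,\alpha}$ in $\alpha$, guaranteed by (A6), should make this work on the positive-measure parameter set.
\end{itemize}

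\emph{Step 4: conclusion.} The peripheral spectrum on $W^{1,1}$ is therefore $\{1\}$, a simple eigenvalue with eigenprojection $\phi\mapsto h_0\int\phi\,dm$. The same holds on $W^{2,1}$, since its eigenvectors lie in $W^{1,1}$. On the $L_{\P}$-invariant subspace $W^{1,1}_0$, the spectral radius is then strictly less than $1$, so $(I-L_{\P})^{-1}=\sum_n L_{\P}^n$ converges in norm there. This gives \eqref{eq:unperturbed-resolvent}.
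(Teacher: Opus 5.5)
\textbf{Gap in Step~2.} The gap is the regularity $h_0\in W^{2,1}(I)$, which is part of the statement. Lower semicontinuity does not give it. Suppose $\phi_n\to h_0$ in $W^{1,1}$ with $\sup_n\norm{\phi_n''}_{L^1}<\infty$. Then the $\phi_n''$ converge only weak-$*$ as measures, so you get $h_0'\in BV$, not $h_0''\in L^1$. For example, smoothings of $\abs{x-\tfrac12}$ are bounded in $W^{2,1}$ and converge in $W^{1,1}$ to a function whose second derivative is $2\delta_{1/2}$.

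Your fallback, that the eigenspaces at $1$ in $W^{1,1}$ and $W^{2,1}$ coincide, is true. It does not follow by ``iterating \eqref{eq:annealed-W21} on an eigenvector'', though: that inequality applies only to $W^{2,1}$ functions, and applying it to $W^{2,1}$-approximants of a $W^{1,1}$ eigenvector reproduces the same $BV$-type bound. Two correct routes are available:
\begin{itemize}
\item The projector argument: the $W^{2,1}$ spectral projector is the restriction of the $W^{1,1}$ one, its range is finite-dimensional and hence $W^{1,1}$-closed, and $W^{2,1}$ is dense in $W^{1,1}$.
\item The paper's shorter route: $\ell\circ L_\P=\ell$ on $W^{2,1}$, so $1\in\spec(L_\P|_{W^{2,1}})$. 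Since $1>\lambda_2$, it is an eigenvalue with an eigenvector $0\ne v\in W^{2,1}$, and uniqueness in $W^{1,1}$ forces $v\in\operatorname{span}\{h_0\}$. This means Step~2 has to come after Step~3.
\end{itemize}
Alternatively, $L_\P$ is power bounded and quasi-compact on $W^{2,1}$, so the uniform ergodic theorem makes your Ces\`aro averages converge in the $W^{2,1}$ norm itself.

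\textbf{Step~1.} Your power-boundedness argument is circular as written. Bounding $\norm{L_\P^n\phi}_{L^2}$ through the embedding presupposes the $W^{1,1}$ bound you are proving, and the a priori bound $\norm{L_\P}_{L^2\to L^2}\le\sqrt{2/\theta}$ may exceed $1$. The repair is the interpolation $\norm{\phi}_{L^2}^2\le\norm{\phi}_{L^\infty}\norm{\phi}_{L^1}\le\norm{\phi}_{W^{1,1}}\norm{\phi}_{L^1}$. This turns \eqref{eq:annealed-W11} into a Lasota--Yorke inequality with contraction factor $<1$ and weak norm $L^1$, on which $L_\P$ contracts. The paper avoids the issue by using the $L^1$ contraction directly to exclude eigenvalues of modulus $>1$ and a Jordan block at $1$.

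\textbf{Step~3, the case $\lambda^p=1$, $p>1$.} Your phase-tracking argument does work. The exceptional set in \eqref{eq:cyclic-phase} is relatively open in $(0,a)\times S$, by continuity of $s$ on $\{h>0\}$ and of $(x,\alpha)\mapsto g_{i,\alpha}(x)$. It is also $m\times\P$-null, hence empty. Then $\abs{\lambda-1}\le\abs{\lambda^{n+1}-1}+\abs{\lambda^{n}-1}\le 4\eta$ gives $\lambda=1$, without even using that $\lambda$ is a root of unity. The paper's route is much shorter, however. Lemma~\ref{lem:open-set-positivity} holds for all $n\ge n_0$, in particular at the times $pn$. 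So your uniqueness argument applies verbatim to $L_\P^p$ and gives $u\in\operatorname{span}\{h_0\}$, whence $\lambda=1$.
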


\begin{proof}
The compact embeddings
\[
 W^{1,1}(I)\Subset L^2(I),
 \qquad
 W^{2,1}(I)\Subset W^{1,1}(I),
\]
together with \eqref{eq:annealed-W11} and
\eqref{eq:annealed-W21}, imply by the Hennion criterion that
$L_{\P}$ is quasi-compact on both Sobolev spaces and that the
corresponding essential spectral radii are strictly smaller than one;
see \cite{Hennion,Baladi}.

Let $\ell(\phi):=\int_I\phi\,dm$. Since $L_{\P}$ preserves integrals,
$L_{\P}^{*}\ell=\ell$, and hence $1\in\spec(L_{\P})$.
On $W^{1,1}(I)$ this point lies outside the essential spectral disk
and is therefore an isolated eigenvalue of finite algebraic
multiplicity. Moreover, no spectral value can have modulus greater
than one. Indeed, any such spectral value would lie outside the
essential spectral disk and hence would be an eigenvalue, which is
impossible because $L_{\P}$ is an $L^1$ contraction. Thus the
spectral radius of $L_{\P}$ is one.

Choose a nonzero real function $u\in W^{1,1}(I)$ satisfying
$L_{\P}u=u$. Positivity gives
\[
 L_{\P}|u|\ge |L_{\P}u|=|u|.
\]
The two sides have the same integral, and therefore $L_{\P}|u|=|u|$.
Consequently,
\[
 h_0
 :=
 \frac{|u|}{\int_I|u|\,dm}
\]
is a stationary density in $W^{1,1}(I)$.

We next prove uniqueness in $W^{1,1}(I)$. Suppose that
\[
 L_{\P}u=u,
 \qquad
 \int_Iu\,dm=0.
\]
It is enough to consider real-valued $u$. As above,
$L_{\P}|u|=|u|$, so the positive and negative parts $u^+$ and $u^-$
are invariant. Since every image under $L_{\P}$ vanishes on
$(a,1]$, the same is true of $u$.

If $u\ne0$, then both $u^+$ and $u^-$ are nonzero. Since
$W^{1,1}(I)\hookrightarrow C^0(I)$, there exist nonempty relatively
open intervals $U,V\subset[0,a]$ and $\delta>0$ such that
\[
 u^+\ge\delta\mathbf1_U,
 \qquad
 u^-\ge\delta\mathbf1_V.
\]
Since $u^+=0$ on $V$, Lemma~\ref{lem:open-set-positivity} gives, for
all sufficiently large $n$,
\[
 0
 =
 \int_Vu^+\,dm
 =
 \int_VL_{\P}^nu^+\,dm
 \ge
 \delta\int_VL_{\P}^n\mathbf1_U\,dm
 >0,
\]
a contradiction. Hence every invariant function in $W^{1,1}(I)$ is
a scalar multiple of $h_0$. In particular, the stationary density is
unique in this space.

The same argument applies to every power of $L_{\P}$. Indeed, suppose
that
\[
 L_{\P}^{q}u=u
\]
for some $q\ge1$. Subtracting $\left(\int_Iu\,dm\right)h_0$ reduces the problem to the zero-average case. Taking real and
imaginary parts if necessary, we may again work with a real-valued
function. 

Since $L_{\P}^{q}$ is a positive Markov operator, the same
argument gives
\[
 L_{\P}^{q}|u|=|u|,
\]
and hence the positive and negative parts of $u$ are fixed by
$L_{\P}^{q}$. Moreover, $(L_{\P}^{q})^n=L_{\P}^{qn}$.

Lemma~\ref{lem:open-set-positivity}, applied at the times $qn$, gives
the same contradiction as above. Thus, for every $q\ge1$, every
function fixed by $L_{\P}^{q}$ is a scalar multiple of $h_0$.

We now exclude the remaining peripheral eigenvalues. Suppose that
\[
 L_{\P}u=\lambda u,
 \qquad
 u\ne0,
 \qquad
 |\lambda|=1.
\]
By Lemma~\ref{lem:peripheral-cyclicity}, $\lambda^k$ is an eigenvalue
of $L_{\P}$ for every $k\ge1$. Since $L_{\P}$ is quasi-compact and
has spectral radius one, it has only finitely many eigenvalues on the
unit circle. Hence the set
\[
 \{\lambda^k:k\ge1\}
\]
is finite, so $\lambda$ is a root of unity. Thus $\lambda^q=1$ for
some $q\ge1$, and consequently $L_{\P}^{q}u=u$.

By the preceding uniqueness result for the powers of $L_{\P}$, $u$
is a scalar multiple of $h_0$. Since $L_{\P}h_0=h_0$, the
eigenvalue equation gives $\lambda=1$. Thus $1$ is the only
peripheral eigenvalue.

It remains to exclude a nontrivial Jordan block at $1$. Otherwise,
since the eigenspace at $1$ is spanned by $h_0$, after rescaling
there would exist $v\in W^{1,1}(I)$ such that
\[
 (L_{\P}-I)v=h_0.
\]
Then $L_{\P}^nv=v+nh_0$. This contradicts the $L^1$ contraction property, since
\[
 \norm{v+nh_0}_{L^1}
 \ge n-\norm{v}_{L^1},
 \qquad
 \norm{L_{\P}^nv}_{L^1}\le\norm{v}_{L^1}.
\]
Therefore $1$ is algebraically simple.

Since
\[
 W^{1,1}(I)
 =
 \operatorname{span}\{h_0\}\oplus W^{1,1}_0(I)
\]
and both subspaces are invariant under $L_{\P}$, the spectrum of
$L_{\P}$ restricted to $W^{1,1}_0(I)$ is contained in a disk of
radius strictly smaller than one. Hence
\[
 (I-L_{\P})^{-1}
 =
 \sum_{n=0}^{\infty}L_{\P}^n
\]
converges in operator norm on $W^{1,1}_0(I)$ and proves
\eqref{eq:unperturbed-resolvent}.

Finally, consider $L_{\P}$ on $W^{2,1}(I)$. Since
$L_{\P}^{*}\ell=\ell$ also on this space, $1$ belongs to its
spectrum. Its essential spectral radius is smaller than one, so $1$
lies outside the essential spectral disk and is therefore an
isolated eigenvalue. Let
\[
 0\ne v\in W^{2,1}(I),
 \qquad
 L_{\P}v=v.
\]
Viewed as an element of $W^{1,1}(I)$, the function $v$ is a scalar
multiple of $h_0$. Consequently, $h_0\in W^{2,1}(I)$.

Every peripheral eigenvector and every associated generalized
eigenvector on $W^{2,1}(I)$ also lies in $W^{1,1}(I)$. Likewise, a
spectral value of modulus greater than one on $W^{2,1}(I)$ would have
an eigenvector in $W^{1,1}(I)$, which is impossible. The conclusions
already proved on $W^{1,1}(I)$ therefore show that $1$ is
algebraically simple on $W^{2,1}(I)$ and that there are no other
peripheral eigenvalues. Hence $L_{\P}$ has a spectral gap on
$W^{2,1}(I)$ as well.
\end{proof}

\subsection{Perturbation of the annealed operators}

\subsubsection{Continuity}

We first establish the continuity properties needed in the spectral perturbation argument.

\begin{lemma}\label{lem:annealed-continuity}
For every $\phi\in W^{1,1}(I)$,
\begin{equation}\label{eq:annealed-strong-continuity}
 L_{\P_\varepsilon}\phi
 \longrightarrow
 L_{\P}\phi
 \ \ \text{in} \ \ W^{1,1}(I)
\end{equation}
as $\varepsilon\to0$. Moreover,
\begin{equation}\label{eq:annealed-mixed-close}
 \sup_{\norm{\phi}_{W^{1,1}}\le1}
 \norm{(L_{\P_\varepsilon}-L_{\P})\phi}_{L^2}
 \longrightarrow0.
\end{equation}
For every fixed $w\in L^1(I)$, the map
$\alpha\mapsto L_\alpha w$ is continuous in $L^1(I)$.
\end{lemma}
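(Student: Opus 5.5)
The proof rests on a single mechanism. Under the standing assumptions the map $\alpha\mapsto L_\alpha\phi$ is well behaved, and the hypothesis on $\P_\varepsilon$ from Definition~\ref{def:order-one-1D} gives weak convergence $\P_\varepsilon\to\P$. The reason is that \eqref{eq:order-one-remainder} gives $\int\varphi\,d\P_\varepsilon\to\int\varphi\,d\P$ for $\varphi\in C^1(\Omega)$. Since all $\P_\varepsilon$ are probability measures and $C^1(\Omega)$ is dense in $C(\Omega)$ in the sup norm, this extends to all $\varphi\in C(\Omega)$. I will therefore prove the three claims in reverse order: first the $L^1$ continuity, then the strong $W^{1,1}$ continuity, and finally the uniform mixed-norm estimate.

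\emph{Continuity of $\alpha\mapsto L_\alpha w$ in $L^1$.} Since $\norm{L_\alpha}_{L^1\to L^1}\le1$ uniformly, it suffices to treat $w\in C^1(I)$, which is dense in $L^1$. For such $w$, use \eqref{eq:pointwise-L}. By (A6), (A4) and the implicit function theorem, the inverse branches $g_{i,\alpha}(x)$ and the weights $1/|f_\alpha'(g_{i,\alpha}(x))|$ depend continuously on $(\alpha,x)$ for $x\in(0,a)$. These weights are bounded by $\theta^{-1}$. Pointwise convergence together with dominated convergence then gives $L_{\alpha'}w\to L_\alpha w$ in $L^1$ as $\alpha'\to\alpha$.

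\emph{Strong continuity \eqref{eq:annealed-strong-continuity}.} Again by density and the uniform bound $\sup_\varepsilon\norm{L_{\P_\varepsilon}}_{W^{1,1}}<\infty$ from Lemma~\ref{lem:annealed-LY}, it suffices to take $\phi\in C^2(I)$. For such $\phi$ I will show that $\alpha\mapsto L_\alpha\phi$ is continuous into $W^{1,1}(I)$. The $L^1$ part is the step above. For the derivative, \eqref{eq:first-derivative-operator} writes $(L_\alpha\phi)'=L_\alpha w_\alpha$ with $w_\alpha=\phi'/f_\alpha'-f_\alpha''\phi/(f_\alpha')^2$. By (A5) and (A6), $w_\alpha\to w_{\alpha'}$ in $L^1$, with the dominating function $C|x-c|^{-\beta-1}$, which is integrable. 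Hence
\[
\norm{L_\alpha w_\alpha-L_{\alpha'}w_{\alpha'}}_{L^1}\le\norm{w_\alpha-w_{\alpha'}}_{L^1}+\norm{(L_\alpha-L_{\alpha'})w_{\alpha'}}_{L^1}\to0 .
\]
So $F(\alpha):=L_\alpha\phi$ is a continuous map from the compact set $\Omega$ into the Banach space $W^{1,1}$. Such a map can be uniformly approximated by finite sums $\sum_j\varphi_j(\alpha)v_j$ with $\varphi_j\in C(\Omega)$, for instance via a partition of unity. Weak convergence of $\P_\varepsilon$ then gives $\int F\,d\P_\varepsilon\to\int F\,d\P$ in $W^{1,1}$.

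\emph{Uniform estimate \eqref{eq:annealed-mixed-close}.} This is the main obstacle, since pointwise convergence alone does not give uniformity over the unit ball. I will use compactness. The unit ball $B$ of $W^{1,1}$ is relatively compact in $L^2$, and the operators $L_{\P_\varepsilon}-L_\P$ are uniformly bounded on $L^2$ by Lemma~\ref{lem:LY-W11}. Hence for each $\eta>0$ there is a finite $\eta$-net $\{\phi_1,\dots,\phi_N\}\subset B$ in the $L^2$ norm, and
\[
\sup_{\phi\in B}\norm{(L_{\P_\varepsilon}-L_\P)\phi}_{L^2}\le C\eta+\max_{j\le N}\norm{(L_{\P_\varepsilon}-L_\P)\phi_j}_{L^2}.
\]
Each term in the maximum tends to zero by the previous step combined with $W^{1,1}\hookrightarrow L^2$. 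Letting $\varepsilon\to0$ and then $\eta\to0$ proves the claim. The only delicate points are that the $L^2$ bound must hold for arbitrary $L^2$ differences $\phi-\phi_j$, which Lemma~\ref{lem:LY-W11} provides, and the measurability needed to define the Bochner integrals, which Lemma~\ref{lem:annealed-LY} already supplies.
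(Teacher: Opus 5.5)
Your proof is correct, and it reorganizes the paper's argument in two places.

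\textbf{Continuity of $\alpha\mapsto L_\alpha\phi$ in $W^{1,1}(I)$ for smooth $\phi$.} The paper works directly with the branch formulas for $L_\alpha\phi$ and $(L_\alpha\phi)'$. It uses uniform parameter continuity on $[0,a-\delta]$ and an explicit tail estimate near $a$, based on \eqref{eq:cusp-inverse-comparison}. You instead write $(L_\alpha\phi)'=L_\alpha w_\alpha$ via \eqref{eq:first-derivative-operator}. You then reduce everything to two ingredients: $L^1$-continuity of $\alpha\mapsto L_\alpha w$ for a fixed $w\in L^1(I)$, and dominated convergence for $w_\alpha$ with the integrable majorant $C\abs{x-c}^{-\beta-1}$. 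No tail analysis near $a$ is needed. This is exactly the device the paper itself uses later for $L_\alpha v_\alpha$ in Lemma~\ref{lem:parameter-derivative}. So proving the $L^1$ assertion first, as you do, is the natural order. That assertion needs no tail argument either, since the weights are bounded by $\theta^{-1}$.

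\textbf{The mixed-norm estimate \eqref{eq:annealed-mixed-close}.} The paper proceeds in stages:
\begin{itemize}
\item it first proves the deterministic uniform statement $\sup_{\norm{\phi}_{W^{1,1}}\le1}\norm{(L_{\alpha_n}-L_\alpha)\phi}_{L^2}\to0$;
\item it then upgrades this to uniform continuity on the compact set $\Omega$;
\item finally it integrates a partition-of-unity approximation in the $W^{1,1}\to L^2$ operator norm.
\end{itemize}
You apply the $L^2$-net argument directly to $L_{\P_\varepsilon}-L_{\P}$. This needs only the uniform $L^2$ bound and the already-established strong convergence at the finitely many net points. Your route is shorter. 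The paper's yields the additional fact that $\alpha\mapsto L_\alpha$ is continuous in the $W^{1,1}\to L^2$ operator norm, which is not used anywhere later.
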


\begin{proof}
The expansion \eqref{eq:order-one-remainder} gives
\[
 \int_\Omega\varphi\,d\P_\varepsilon
 \longrightarrow
 \int_\Omega\varphi\,d\P
\]
for every $\varphi\in C^1(\Omega)$. Let $\varphi\in C(\Omega)$ and
choose $\varphi_j\in C^1(\Omega)$ with
$\|\varphi_j-\varphi\|_\infty\to0$. Since the measures are
probabilities,
\begin{align*}
 \left|\int_\Omega\varphi\,d\P_\varepsilon
 -\int_\Omega\varphi\,d\P\right|
 \le{}&2\|\varphi-\varphi_j\|_\infty+\left|\int_\Omega\varphi_j\,d\P_\varepsilon
 -\int_\Omega\varphi_j\,d\P\right|.
\end{align*}
Letting first $\varepsilon\to0$ and then $j\to\infty$ gives
\begin{equation}\label{eq:weak-convergence}
 \int_\Omega\varphi\,d\P_\varepsilon
 \longrightarrow
 \int_\Omega\varphi\,d\P
 \ \ 
 \text{for every} \ \ \varphi\in C(\Omega).
\end{equation}

We next prove that, for every fixed $\phi\in W^{1,1}(I)$, the map
$\alpha\mapsto L_\alpha\phi$ is continuous in $W^{1,1}(I)$. Suppose
first that $\phi\in C^1(I)$. On $(0,a)$ the inverse-branch formula
and its derivative are
\begin{align}
 (L_\alpha\phi)(x)
 &=
 \sum_{i=0}^1
 \frac{\phi(g_{i,\alpha}(x))}
 {|f_\alpha'(g_{i,\alpha}(x))|},
 \label{eq:branch-continuity-value}\\
 (L_\alpha\phi)'(x)
 &=
 \sum_{i=0}^1
 \left[
 \frac{\phi'(g_{i,\alpha}(x))}
 {f_\alpha'(g_{i,\alpha}(x))
  |f_\alpha'(g_{i,\alpha}(x))|}
 -
 \frac{f_\alpha''(g_{i,\alpha}(x))
 \phi(g_{i,\alpha}(x))}
 {|f_\alpha'(g_{i,\alpha}(x))|^3}
 \right].
 \label{eq:branch-continuity-derivative}
\end{align}

For every $\delta>0$, on the interval $[0,a-\delta]$, the inverse branches and all the
coefficients in these formulas depend continuously on $\alpha$ by
\textup{(A6)}.

Only the tail near $a$ requires a uniform estimate. If $x$ is close
to $a$ and $y=g_{i,\alpha}(x)$, then \textup{(A5)} and $\beta>-1$
give
\begin{equation}\label{eq:cusp-inverse-comparison}
 a-x
 =
 \left|\int_y^c f_\alpha'(t)\,dt\right|
 \asymp
 |y-c|^{\beta+1},
\end{equation}
uniformly in $\alpha$ and in the branch. Hence the preimage of
$(a-\delta,a)$ is contained in
$|y-c|\le C\delta^{1/(\beta+1)}$. Put
$r=C\delta^{1/(\beta+1)}$. A change of variables on each branch
shows that the $L^1$-mass of
\eqref{eq:branch-continuity-value} on this tail is bounded by
$C\|\phi\|_\infty r$. The two terms in
\eqref{eq:branch-continuity-derivative} are bounded there, after the
same change of variables, by
\[
 C\|\phi'\|_\infty\int_0^r t^{-\beta}\,dt,
 \qquad
 C\|\phi\|_\infty\int_0^r t^{-\beta-1}\,dt.
\]
Both tend to zero as $r\to0$, because $\beta<0$. We may therefore
first restrict to a compact subinterval of $(0,a)$, use uniform
parameter continuity there, and then let the omitted tail shrink.
This proves continuity in $W^{1,1}(I)$ for $C^1$ functions.

For a general $\phi\in W^{1,1}(I)$, choose
$\phi_j\in C^1(I)$ with $\phi_j\to\phi$ in $W^{1,1}(I)$.
Lemma~\ref{lem:LY-W11} and the embedding
$W^{1,1}(I)\hookrightarrow L^2(I)$ give
\begin{equation}\label{eq:uniform-W11-bound}
 \sup_{\alpha\in\Omega}
 \norm{L_\alpha\psi}_{W^{1,1}}
 \le C\norm{\psi}_{W^{1,1}}.
\end{equation}
If $\alpha_n\to\alpha$, then
\begin{align*}
 \norm{L_{\alpha_n}\phi-L_\alpha\phi}_{W^{1,1}}
 \le{}&C\norm{\phi-\phi_j}_{W^{1,1}}
 +\norm{L_{\alpha_n}\phi_j-L_\alpha\phi_j}_{W^{1,1}}\\
 &+C\norm{\phi_j-\phi}_{W^{1,1}}.
\end{align*}
For fixed $j$ the middle term tends to zero, and then $j\to\infty$
proves parameter continuity for every $\phi\in W^{1,1}(I)$.

Fix such a $\phi$ and put $F(\alpha)=L_\alpha\phi$. Given
$\eta>0$, compactness of $\Omega$ and continuity of $F$ give points
$\alpha_1,\ldots,\alpha_N\in\Omega$ and a continuous partition of
unity $\chi_1,\ldots,\chi_N$ such that
\[
 \sup_{\alpha\in\Omega}
 \norm{F(\alpha)-\sum_{j=1}^N
 \chi_j(\alpha)F(\alpha_j)}_{W^{1,1}}<\eta.
\]
Integrating this approximation against $\P_\varepsilon$ and $\P$
gives
\begin{align*}
 \norm{L_{\P_\varepsilon}\phi-L_{\P}\phi}_{W^{1,1}}
 \le{}&2\eta
 +\sum_{j=1}^N
 \left|\int_\Omega\chi_j\,d(\P_\varepsilon-\P)\right|
 \norm{F(\alpha_j)}_{W^{1,1}}.
\end{align*}
Each integral in the finite sum tends to zero by
\eqref{eq:weak-convergence}. Since $\eta>0$ is arbitrary,
\eqref{eq:annealed-strong-continuity} follows.

We now prove the mixed-norm convergence. If $\alpha_n\to\alpha$, then
\begin{equation}\label{eq:deterministic-mixed-continuity}
 \sup_{\norm{\phi}_{W^{1,1}}\le1}
 \norm{(L_{\alpha_n}-L_\alpha)\phi}_{L^2}
 \longrightarrow0.
\end{equation}
Indeed, the unit ball of $W^{1,1}(I)$ is relatively compact in
$L^2(I)$, while the operators $L_\alpha$ are uniformly bounded on
$L^2(I)$. Given $\eta>0$, choose a finite $L^2$-net
$\phi_1,\ldots,\phi_N$ for that unit ball. For every $\phi$ in the
ball choose $j$ with $\|\phi-\phi_j\|_{L^2}<\eta$. Then
\[
 \norm{(L_{\alpha_n}-L_\alpha)\phi}_{L^2}
 \le C\eta
 +\norm{(L_{\alpha_n}-L_\alpha)\phi_j}_{L^2},
\]
and the last term tends to zero for each fixed $j$. This proves
\eqref{eq:deterministic-mixed-continuity}. Since $\Omega$ is compact,
the continuity is uniform in the base parameter. Hence we may choose
points $\alpha_1,\ldots,\alpha_N$ and a continuous partition of unity
$\chi_1,\ldots,\chi_N$ such that
\[
 \sup_{\alpha\in\Omega}
 \sup_{\norm{\phi}_{W^{1,1}}\le1}
 \norm{L_\alpha\phi-
 \sum_{j=1}^N\chi_j(\alpha)L_{\alpha_j}\phi}_{L^2}<\eta.
\]
It follows that
\begin{align*}
 &\sup_{\norm{\phi}_{W^{1,1}}\le1}
 \norm{(L_{\P_\varepsilon}-L_{\P})\phi}_{L^2}\le
 2\eta+
 \sum_{j=1}^N
 \left|\int_\Omega\chi_j\,d(\P_\varepsilon-\P)\right|
 \sup_{\norm{\phi}_{W^{1,1}}\le1}
 \norm{L_{\alpha_j}\phi}_{L^2}.
\end{align*}
The finite sum tends to zero by \eqref{eq:weak-convergence}. Letting
$\eta\to0$ proves \eqref{eq:annealed-mixed-close}.

Finally, for $w\in C(I)$ the inverse-branch formula, the fixed support
interval and the same tail argument show that
$\alpha\mapsto L_\alpha w$ is continuous in $L^1(I)$. Approximation
of an arbitrary $w\in L^1(I)$ by continuous functions, together with
the $L^1$ contraction property, proves the last assertion.
\end{proof}

\subsubsection{Differentiation}

\begin{lemma}\label{lem:parameter-derivative}
For every $h\in W^{2,1}(I)$, the map
$\alpha\mapsto L_\alpha h$ is differentiable as a
$W^{1,1}(I)$-valued map. Its derivative depends continuously on
$\alpha$ and satisfies
\begin{equation}\label{eq:operator-parameter-derivative}
 \partial_\alpha(L_\alpha h)
 =
 L_\alpha(A_\alpha h'+B_\alpha h).
\end{equation}
Moreover,
\begin{equation}\label{eq:parameter-derivative-bound}
 \sup_{\alpha\in\Omega}
 \norm{\partial_\alpha(L_\alpha h)}_{W^{1,1}}
 \le C\norm{h}_{W^{2,1}}.
\end{equation}
\end{lemma}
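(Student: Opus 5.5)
We plan to argue by duality and the weak form of the transfer operator, then upgrade to $W^{1,1}$ via the derivative formula \eqref{eq:first-derivative-operator}.

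\emph{Step 1: the formula on smooth functions.} Take $h\in C^2(I)$ and $\psi\in C^\infty_c(0,a)$. We have $\int (L_\alpha h)\psi\,dm=\int h\,(\psi\circ f_\alpha)\,dm$. Differentiating under the integral is legitimate because $\partial_\alpha(\psi\circ f_\alpha)=\psi'(f_\alpha)\,\partial_\alpha f_\alpha$ is bounded by $\|\psi'\|_\infty D_0$ by \eqref{eq:parameter-bounds}; note $\beta+1>0$. On each branch we write $\partial_\alpha f_\alpha=-A_\alpha f_\alpha'$ and $\psi'(f_\alpha)f_\alpha'=(\psi\circ f_\alpha)'$. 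Integrating by parts on $(0,c)$ and $(c,1)$ then gives
\[
\int h\,\psi'(f_\alpha)\partial_\alpha f_\alpha\,dm=\int (hA_\alpha)'\,\psi\circ f_\alpha\,dm .
\]
The boundary terms vanish: at $0,1$ because $\partial_\alpha f_\alpha=0$ there (since $f_\alpha(0)=f_\alpha(1)=0$), and at $c$ because $A_\alpha(x)=O(|x-c|)\to0$ by (A5)--(A6). Moreover $(hA_\alpha)'=A_\alpha h'+A_\alpha' h$, and by (A6) with mixed derivatives commuting, $A_\alpha'=B_\alpha$. Hence $\partial_\alpha\int (L_\alpha h)\psi=\int L_\alpha(A_\alpha h'+B_\alpha h)\psi$. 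This identifies the weak derivative.

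\emph{Step 2: coefficient bounds and the $W^{1,1}$ estimate.} From (A5)--(A6) we get
\[
|A_\alpha|\le C|x-c|,\qquad |B_\alpha|\le C,
\]
\[
|A_\alpha'|=|B_\alpha|\le C,\qquad |B_\alpha'|\le C|x-c|^{-1}\cdot|x-c|^{0}\ldots
\]
We compute $B_\alpha'$ explicitly. It involves $\partial_\alpha f_\alpha f'''/f'^2$, $\partial_\alpha f'\,f''/f'^2$, $\partial_\alpha f\,(f'')^2/f'^3$ and $\partial_\alpha f''/f'$, each of which is $O(|x-c|^{-1})$. So $B_\alpha'\notin L^1$ in general. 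We therefore do not put $g:=A_\alpha h'+B_\alpha h$ into $W^{1,1}$ directly. Instead we use \eqref{eq:first-derivative-operator}:
\[
(L_\alpha g)'=L_\alpha\!\left(\frac{g'}{f_\alpha'}\right)-L_\alpha\!\left(\frac{f_\alpha''}{f_\alpha'^2}\,g\right).
\]
The weights $1/f_\alpha'=O(|x-c|^{-\beta})$ and $f''/f'^2=O(|x-c|^{-\beta-1})$ compensate. We get $|B_\alpha' h/f_\alpha'|\le C|x-c|^{-\beta-1}\|h\|_\infty$, which is integrable, and $|A_\alpha h''/f'|\le C\|h''\|_{L^1}$. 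The remaining terms are bounded similarly using $W^{2,1}\hookrightarrow C^1$. This yields \eqref{eq:parameter-derivative-bound}, with a justification of \eqref{eq:first-derivative-operator} for this weakly singular $g$ by truncating near $c$.

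\emph{Step 3: continuity in $\alpha$ and genuine differentiability.} Continuity of $\alpha\mapsto L_\alpha(A_\alpha h'+B_\alpha h)$ in $W^{1,1}$ goes as in Lemma~\ref{lem:annealed-continuity}. On compact subsets of $(0,a)$ we use the branch formulas and the continuity in (A6). The tails near $a$ are made uniformly small using the integrable majorants from Step 2 and \eqref{eq:cusp-inverse-comparison}. For fixed $h$ we then write
\[
L_{\alpha+t}h-L_\alpha h=\int_0^t G(\alpha+s)\,ds,
\]
a Bochner integral in $W^{1,1}$ of the continuous map $G$. This holds because both sides agree weakly against every $\psi$ by Step 1. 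Differentiability in $W^{1,1}$ with derivative $G(\alpha)$ follows. Finally we pass from $C^2$ to general $h\in W^{2,1}$ by density, using the uniform bound \eqref{eq:parameter-derivative-bound}.

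The main obstacle is Step 2. The naive route via $\|g\|_{W^{1,1}}$ fails, since $B_\alpha'$ is only $O(|x-c|^{-1})$. The cusp contraction factor $1/f_\alpha'$ must be used inside $L_\alpha$, and the singular terms must be bookkept carefully so that each weight is integrable for $\beta\in(-1,-1/2)$.
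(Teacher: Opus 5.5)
Your proposal is correct, and its skeleton matches the paper's. Both arguments give a branchwise $W^{1,1}$ estimate for $L_\alpha(A_\alpha h'+B_\alpha h)$, in which the factor $1/f_\alpha'$ absorbs the $O(|x-c|^{-1})$ singularity of $B_\alpha'$. Both then prove continuity of this candidate in $\alpha$ and close with a $W^{1,1}$-valued fundamental theorem of calculus.

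Where you genuinely differ is in identifying the derivative. The paper differentiates the branch formula pointwise, using $\partial_\alpha g_{i,\alpha}=A_\alpha\circ g_{i,\alpha}$ and $\partial_\alpha|g_{i,\alpha}'|=(B_\alpha\circ g_{i,\alpha})\,|g_{i,\alpha}'|$, both obtained from $f_\alpha\circ g_{i,\alpha}=\mathrm{id}$. It then lifts this pointwise identity to \eqref{eq:operator-FTC} by a branchwise FTC. You instead differentiate $\int h\,(\psi\circ f_\alpha)\,dm$ under the integral, which needs only $|\partial_\alpha f_\alpha|\le D_0|x-c|^{\beta+1}$, and then integrate by parts. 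The boundary terms vanish because $A_\alpha=O(|x-c|)$ at $c$ and $\psi$ is supported in $(0,a)$.

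Your route buys three things. It never differentiates the inverse branches in $\alpha$. It makes the passage from the scalar identity to the Bochner identity transparent: both sides vanish on $(a,1]$ and agree against $C^\infty_c(0,a)$. It also exhibits the derivative in divergence form, $L_\alpha\bigl((A_\alpha h)'\bigr)$, from which $\int_I\partial_\alpha(L_\alpha h)\,dm=0$ is immediate. The paper's route instead gives the pointwise formula directly and shows where $A_\alpha$ and $B_\alpha$ come from.

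Your secondary deviations are equally valid. You prove continuity by the compact-part-plus-tail argument of Lemma~\ref{lem:annealed-continuity} for $h\in C^2$, then pass to $W^{2,1}$ by density via the linear bound \eqref{eq:parameter-derivative-bound}. The paper uses $L^1$-continuity of $v_\alpha$, $v_\alpha'/f_\alpha'$ and $f_\alpha''v_\alpha/(f_\alpha')^2$ together with the $L^1$ contraction.

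If you phrase the truncation near $c$ as $W^{1,1}(I)$-convergence of $L_\alpha(g\chi_\delta)$, it also disposes of the zero trace at $a$, which the paper checks separately. One small slip: $|A_\alpha h''/f_\alpha'|\le C\|h''\|_{L^1}$ should be an $L^1$ bound, using $|A_\alpha/f_\alpha'|=O(|x-c|^{1-\beta})$.
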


\begin{proof}
Put $u_\alpha:=\partial_\alpha f_\alpha$. From
\eqref{eq:A-B-def}, the commutation of mixed derivatives in
\textup{(A6)}, and \eqref{eq:parameter-bounds}, one obtains, with
$t=\abs{x-c}$,
\begin{equation}\label{eq:A-B-cusp-orders}
 A_\alpha=O(t),
 \qquad
 A_\alpha'=B_\alpha=O(1),
 \qquad
 B_\alpha'=O(t^{-1}),
\end{equation}
uniformly in $\alpha$. More explicitly,
\begin{equation}\label{eq:B-prime-formula}
 B_\alpha'
 =
 2\frac{u_\alpha'f_\alpha''}{(f_\alpha')^2}
 +\frac{u_\alpha f_\alpha'''}{(f_\alpha')^2}
 -2\frac{u_\alpha(f_\alpha'')^2}{(f_\alpha')^3}
 -\frac{u_\alpha''}{f_\alpha'},
\end{equation}
and every term on the right is $O(t^{-1})$.

Set
\[
 v_\alpha:=A_\alpha h'+B_\alpha h.
\]
Since $h\in W^{2,1}(I)$, both $h$ and $h'$ are bounded. Hence
$v_\alpha\in L^\infty(I)$. Moreover,
\[
 v_\alpha
 \in
 AC_{\mathrm{loc}}((0,c))
 \cap
 AC_{\mathrm{loc}}((c,1)),
\]
and its local classical derivative is
\begin{equation}\label{eq:v-alpha-prime}
 v_\alpha'
 =
 A_\alpha h''
 +
 2B_\alpha h'
 +
 B_\alpha'h.
\end{equation}
We do not claim that $v_\alpha$ belongs to $W^{1,1}(I)$.

For $i=0,1$, define on $(0,a)$
\[
 w_{i,\alpha}(x)
 :=
 v_\alpha(g_{i,\alpha}(x))
 |g_{i,\alpha}'(x)|.
\]
The function $w_{i,\alpha}$ is locally absolutely continuous on
$(0,a)$. Since
\[
 g_{i,\alpha}'(x)
 =
 \frac{1}{f_\alpha'(g_{i,\alpha}(x))},
 \qquad
 |g_{i,\alpha}'(x)|
 =
 \frac{1}{|f_\alpha'(g_{i,\alpha}(x))|},
\]
its classical derivative is
\begin{align}
 w_{i,\alpha}'(x)
 ={}&
 \frac{
 v_\alpha'(g_{i,\alpha}(x))
 }{
 f_\alpha'(g_{i,\alpha}(x))
 |f_\alpha'(g_{i,\alpha}(x))|
 }
 \notag\\
 &-
 \frac{
 f_\alpha''(g_{i,\alpha}(x))
 v_\alpha(g_{i,\alpha}(x))
 }{
 |f_\alpha'(g_{i,\alpha}(x))|^3
 }.
 \label{eq:branch-v-alpha-derivative}
\end{align}

A change of variables $x=f_\alpha(y)$ on the $i$th branch gives
\begin{align}
 \int_0^a|w_{i,\alpha}'(x)|\,dx
 \le{}&
 \int_{I_i}
 \frac{|v_\alpha'(y)|}{|f_\alpha'(y)|}\,dy
 \notag\\
 &+
 \int_{I_i}
 \frac{|f_\alpha''(y)|}{|f_\alpha'(y)|^2}
 |v_\alpha(y)|\,dy,
 \label{eq:branch-v-L1}
\end{align}
where $I_0=(0,c)$ and $I_1=(c,1)$.

The required integrability follows from
\eqref{eq:A-B-cusp-orders}. With $t=|y-c|$,
\[
 \frac{|A_\alpha|}{|f_\alpha'|}=O(t^{1-\beta}),
 \qquad
 \frac{|B_\alpha|}{|f_\alpha'|}=O(t^{-\beta}),
 \qquad
 \frac{|B_\alpha'|}{|f_\alpha'|}=O(t^{-\beta-1}),
\]
and
\[
 \frac{|f_\alpha''A_\alpha|}{|f_\alpha'|^2}=O(t^{-\beta}),
 \qquad
 \frac{|f_\alpha''B_\alpha|}{|f_\alpha'|^2}=O(t^{-\beta-1}).
\]
The weights $t^{1-\beta}$ and $t^{-\beta}$ are bounded near the cusp,
while $t^{-\beta-1}$ belongs to $L^1$ because $\beta<0$. Therefore
\[
 \int_0^a|w_{i,\alpha}'(x)|\,dx
 \le
 C\norm{h}_{W^{2,1}},
\]
uniformly in $\alpha$ and $i$. Also,
\[
 \int_0^a|w_{i,\alpha}(x)|\,dx
 =
 \int_{I_i}|v_\alpha(y)|\,dy
 <\infty.
\]
Thus $w_{i,\alpha}\in W^{1,1}(0,a)$. As $x\uparrow a$, one has $g_{i,\alpha}(x)\to c$, and hence
\[
 |w_{i,\alpha}(x)|
 \le
 \frac{\norm{v_\alpha}_{L^\infty}}
 {|f_\alpha'(g_{i,\alpha}(x))|}
 \longrightarrow0.
\]
Consequently, $w_{i,\alpha}$ has zero trace at $a$. Its extension by
zero to $(a,1]$ belongs to $W^{1,1}(I)$ and produces no Dirac mass at
$a$. Since $L_\alpha v_\alpha = w_{0,\alpha}+w_{1,\alpha}$, we conclude that $L_\alpha v_\alpha\in W^{1,1}(I)$.

Summing \eqref{eq:branch-v-alpha-derivative} over the two branches
gives, as an identity of weak derivatives on $I$,
\begin{equation}\label{eq:D-alpha-spatial-derivative}
 \bigl(L_\alpha v_\alpha\bigr)'
 =
 L_\alpha\left(\frac{v_\alpha'}{f_\alpha'}\right)
 -
 L_\alpha\left(
 \frac{f_\alpha''}{(f_\alpha')^2}v_\alpha
 \right).
\end{equation}
Moreover, \eqref{eq:branch-v-L1} and the preceding estimates yield
\begin{align}
 \norm{L_\alpha v_\alpha}_{W^{1,1}}
 \le C\Bigg(&
 \norm{v_\alpha}_{L^1}
 +
 \int_I
 \frac{|v_\alpha'|}{|f_\alpha'|}\,dm
 \notag\\
 &+
 \int_I
 \frac{|f_\alpha''|}{|f_\alpha'|^2}
 |v_\alpha|\,dm
 \Bigg)
 \le
 C\norm{h}_{W^{2,1}}.
 \label{eq:parameter-W11-estimate}
\end{align}

We next prove continuity in $\alpha$. The functions
\[
 v_\alpha,
 \qquad
 \frac{v_\alpha'}{f_\alpha'},
 \qquad
 \frac{f_\alpha''}{(f_\alpha')^2}v_\alpha
\]
depend continuously on $\alpha$ as $L^1(I)$-valued functions. This follows from pointwise parameter continuity away from $c$ and dominated convergence using the weights displayed above. For example, if $\alpha_n\to\alpha$, then
\[
 \norm{L_{\alpha_n}v_{\alpha_n}-L_\alpha v_\alpha}_{L^1}
 \le
 \norm{v_{\alpha_n}-v_\alpha}_{L^1}
 +\norm{(L_{\alpha_n}-L_\alpha)v_\alpha}_{L^1},
\]
and both terms tend to zero by the $L^1$ contraction property and the last assertion of Lemma~\ref{lem:annealed-continuity}. Applying the same estimate to the two terms in
\eqref{eq:D-alpha-spatial-derivative} shows that
\begin{equation}\label{eq:D-alpha-continuity}
 \alpha\longmapsto
 L_\alpha v_\alpha
 \ \text{is continuous in }W^{1,1}(I).
\end{equation}

It remains to identify this continuous candidate with the actual
$W^{1,1}$-derivative. Differentiating $f_\alpha(g_{i,\alpha}(x))=x$ with respect to $\alpha$ gives
\[
 \partial_\alpha g_{i,\alpha}(x)
 =A_\alpha(g_{i,\alpha}(x)).
\]
Differentiating with respect to $x$ and using
$A_\alpha'=B_\alpha$ gives
\[
 \partial_\alpha |g_{i,\alpha}'(x)|
 =B_\alpha(g_{i,\alpha}(x))
 |g_{i,\alpha}'(x)|.
\]
Therefore, for almost every $x\in(0,a)$,
\[
 \frac{\partial}{\partial\alpha}
 \left(
 h(g_{i,\alpha}(x))|g_{i,\alpha}'(x)|
 \right)
 =
 \bigl(A_\alpha h'+B_\alpha h\bigr)
 (g_{i,\alpha}(x))|g_{i,\alpha}'(x)|.
\]
Summing over the two branches gives the pointwise formula
\eqref{eq:operator-parameter-derivative}.

By \eqref{eq:D-alpha-continuity}, the map
\[
 \gamma\longmapsto
 L_\gamma(A_\gamma h'+B_\gamma h)
\]
is continuous from $\Omega$ to $W^{1,1}(I)$. Hence the fundamental
theorem of calculus, applied branchwise, gives
\begin{equation}\label{eq:operator-FTC}
 L_{\alpha+s}h-L_\alpha h
 =
 \int_0^s
 L_{\alpha+r}(A_{\alpha+r}h'+B_{\alpha+r}h)\,dr
 \qquad\text{in }W^{1,1}(I),
\end{equation}
whenever the segment joining $\alpha$ and $\alpha+s$ is contained in
$\Omega$; at the endpoints the identity is understood one-sidedly.
It follows that
\begin{align*}
 &\left\|
 \frac{L_{\alpha+s}h-L_\alpha h}{s}
 -L_\alpha(A_\alpha h'+B_\alpha h)
 \right\|_{W^{1,1}}\le
 \sup_{\substack{\gamma\in\Omega\\
 |\gamma-\alpha|\le|s|}}
 \left\|
 L_\gamma(A_\gamma h'+B_\gamma h)
 -L_\alpha(A_\alpha h'+B_\alpha h)
 \right\|_{W^{1,1}}
 \longrightarrow0.
\end{align*}

Thus $\alpha\mapsto L_\alpha h$ is differentiable in
$W^{1,1}(I)$, with derivative given by
\eqref{eq:operator-parameter-derivative}. Finally,
\eqref{eq:parameter-W11-estimate} gives
\eqref{eq:parameter-derivative-bound}.
\end{proof}

\begin{lemma}\label{lem:forcing-term}
The map $\varepsilon\mapsto L_{\P_\varepsilon}h_0$ is differentiable at $\varepsilon=0$ in $W^{1,1}(I)$, and
\begin{equation}\label{eq:forcing-term}
 \frac{L_{\P_\varepsilon}h_0-L_{\P}h_0}{\varepsilon}
 \longrightarrow q
 \ \ \text{in} \ \ W^{1,1}(I),
\end{equation}
where
\begin{equation}\label{eq:q-main}
 q
 :=
 \int_\Omega
 L_\alpha(A_\alpha h_0'+B_\alpha h_0)
 \,d\nu(\alpha)
 \in W^{1,1}_0(I).
\end{equation}
\end{lemma}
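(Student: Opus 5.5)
The plan is to treat $F(\alpha):=L_\alpha h_0$ as a $W^{1,1}(I)$-valued $C^1$ function on $\Omega$ and pass the weak differentiability of $\P_\varepsilon$, which is only formulated for scalar test functions in $C^1(\Omega)$, to Banach-valued test functions. By Proposition~\ref{prop:unperturbed-system}, $h_0\in W^{2,1}(I)$. Lemma~\ref{lem:parameter-derivative} then gives that $F$ is differentiable in $W^{1,1}(I)$, with continuous derivative $F'(\alpha)=L_\alpha(A_\alpha h_0'+B_\alpha h_0)$ bounded by $C\norm{h_0}_{W^{2,1}}$. Both $F$ and $F'$ are therefore uniformly continuous on the compact interval $\Omega$. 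We need
\[
\varepsilon^{-1}\int_\Omega F\,d(\P_\varepsilon-\P)\to\int_\Omega F'\,d\nu
\]
in $W^{1,1}(I)$. The right-hand side is a Bochner integral against the finite signed measure $\nu$, and it equals $q$.

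\textbf{Step 1: reduction to finite sums.} Fix $\eta>0$. Since $F\in C^1(\Omega;W^{1,1})$, I approximate it in the $C^1$ norm by a function of the form
\[
G(\alpha)=\sum_{j=1}^N\varphi_j(\alpha)e_j,\qquad \varphi_j\in C^1(\Omega),\ e_j\in W^{1,1}(I),
\]
with $\sup_\Omega\norm{F-G}_{W^{1,1}}+\sup_\Omega\norm{F'-G'}_{W^{1,1}}<\eta$. One way to build $G$: take a piecewise linear interpolation of $F'$ on a fine partition, integrate it, and add $F(\alpha_{\min})$. Alternatively, convolve a $C^1$ extension of $F$ with a mollifier and use piecewise polynomial interpolation. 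For $G$ itself, \eqref{eq:order-one-remainder} applied to each $\varphi_j$ gives
\[
\varepsilon^{-1}\int G\,d(\P_\varepsilon-\P)\to\sum_j\int\varphi_j'\,d\nu\;e_j=\int G'\,d\nu .
\]
The error $\norm{\int G'\,d\nu-\int F'\,d\nu}$ is at most $\abs{\nu}(\Omega)\,\eta$.

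\textbf{Step 2: the remainder $R:=F-G$ (the main obstacle).} Here we need $\varepsilon^{-1}\norm{\int R\,d(\P_\varepsilon-\P)}_{W^{1,1}}\le C\eta$ uniformly for small $\varepsilon$. Weak convergence alone does not control the $\varepsilon^{-1}$ factor. What is needed is a uniform bound of the form
\[
\Bigl|\int\varphi\,d(\P_\varepsilon-\P)\Bigr|\le C\abs{\varepsilon}\,\norm{\varphi}_{C^1}
\]
for all $\varphi\in C^1(\Omega)$ and small $\varepsilon$. I would obtain this from the uniform boundedness principle. The linear functionals $\Lambda_\varepsilon(\varphi):=\varepsilon^{-1}\int\varphi\,d(\P_\varepsilon-\P)$ on the Banach space $C^1(\Omega)$ are bounded, and they converge pointwise by Definition~\ref{def:order-one-1D}. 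Hence every sequence $\varepsilon_n\to0$ has $\sup_n\norm{\Lambda_{\varepsilon_n}}<\infty$, and this suffices for the sequential limit defining differentiability. Then for any $\ell\in (W^{1,1})^*$ with $\norm{\ell}\le1$, the scalar function $\ell\circ R$ lies in $C^1(\Omega)$ with $C^1$ norm at most $\eta$. So
\[
\abs{\ell(\Lambda_{\varepsilon_n}R)}\le C\eta,
\]
where $\Lambda_{\varepsilon_n}R$ denotes the Bochner integral, which commutes with $\ell$. Taking the supremum over $\ell$ bounds the norm by $C\eta$. Combining Steps 1 and 2 and letting $\eta\to0$ proves \eqref{eq:forcing-term} along every sequence, hence as $\varepsilon\to0$.

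\textbf{Step 3: $q$ has zero mean.} Since every $L_\alpha$ preserves integrals and $\int_I(\cdot)\,dm$ is continuous on $W^{1,1}$, we get $\int q\,dm=\lim\varepsilon^{-1}(\int h_0-\int h_0)=0$. Alternatively, $\int_I L_\alpha v_\alpha\,dm=\int v_\alpha\,dm=\partial_\alpha\int L_\alpha h_0\,dm=0$, so $q\in W^{1,1}_0(I)$.
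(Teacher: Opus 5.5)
Your proposal is correct and follows essentially the same route as the paper. Both arguments use a finite-rank $C^1$ approximation of $F(\alpha)=L_\alpha h_0$, built by integrating a finite-rank approximation of $F'$, together with a uniform bound $\bigl|\varepsilon^{-1}\int_\Omega\varphi\,d(\P_\varepsilon-\P)\bigr|\le M(\norm{\varphi}_\infty+\norm{\varphi'}_\infty)$ from the uniform boundedness principle; the remainder is then tested against norm-one functionals on $W^{1,1}(I)$, and the zero-mean argument is the same. The one minor difference is that you apply Banach--Steinhaus along sequences $\varepsilon_n\to0$, which is a slightly cleaner substitute for the paper's ``local form'' of the principle.
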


\begin{proof}
Set $F(\alpha):=L_\alpha h_0$. By
Proposition~\ref{prop:unperturbed-system}, $h_0\in W^{2,1}(I)$, and
Lemma~\ref{lem:parameter-derivative} shows that $F$ is continuously differentiable as a $W^{1,1}(I)$-valued function, with
\[
 F'(\alpha)
 =L_\alpha(A_\alpha h_0'+B_\alpha h_0).
\]

For $\varepsilon\ne0$, define
\[
 T_\varepsilon(\varphi)
 :=
 \frac1\varepsilon
 \int_\Omega\varphi\,d(\P_\varepsilon-\P),
 \qquad \varphi\in C^1(\Omega).
\]
By \eqref{eq:order-one-remainder}, $T_\varepsilon(\varphi)$ converges for every $\varphi\in C^1(\Omega)$. A standard local form of the uniform boundedness principle (obtained by the usual Baire-category argument) therefore gives $M>0$ and $\varepsilon_1>0$ such that
\begin{equation}\label{eq:scalar-uniform-bound}
 |T_\varepsilon(\varphi)|
 \le
 M\left(\norm{\varphi}_{\infty}
 +\norm{\varphi'}_{\infty}\right)
\end{equation}
whenever $0<|\varepsilon|<\varepsilon_1$.

We use a finite-dimensional approximation to pass from scalar to
$W^{1,1}$-valued differentiation. Since $F'$ is continuous on the compact interval $\Omega$, for every $\eta>0$ there is a continuous function
$G:\Omega\to W^{1,1}(I)$, taking values in the span of finitely many vectors, such that
\[
 \sup_{\alpha\in\Omega}
 \norm{F'(\alpha)-G(\alpha)}_{W^{1,1}}<\eta.
\]
Fix $\alpha_*\in\Omega$ and set
\[
 F_\eta(\alpha)
 :=F(\alpha_*)+\int_{\alpha_*}^{\alpha}G(t)\,dt.
\]
Then $F_\eta$ takes values in a finite-dimensional subspace,
$F_\eta'=G$, and
\begin{equation}\label{eq:finite-dimensional-approximation}
 \sup_{\alpha\in\Omega}
 \norm{F(\alpha)-F_\eta(\alpha)}_{W^{1,1}}
 +
 \sup_{\alpha\in\Omega}
 \norm{F'(\alpha)-F_\eta'(\alpha)}_{W^{1,1}}
 \le C_\Omega\eta.
\end{equation}
For $F_\eta$, Definition~\ref{def:order-one-1D} applies componentwise and yields
\[
 \frac1\varepsilon
 \int_\Omega F_\eta\,d(\P_\varepsilon-\P)
 \longrightarrow
 \int_\Omega F_\eta'\,d\nu
 \qquad\text{in }W^{1,1}(I).
\]
Testing \eqref{eq:scalar-uniform-bound} against norm-one continuous linear functionals on $W^{1,1}(I)$ and using
\eqref{eq:finite-dimensional-approximation}, we obtain
\[
 \norm{
 \frac1\varepsilon
 \int_\Omega(F-F_\eta)\,d(\P_\varepsilon-\P)
 }_{W^{1,1}}
 \le C\eta,
\]
while
\[
 \norm{
 \int_\Omega(F'-F_\eta')\,d\nu
 }_{W^{1,1}}
 \le |\nu|(\Omega)\eta.
\]
Letting first $\varepsilon\to0$ and then $\eta\to0$ gives
\[
 \frac1\varepsilon
 \int_\Omega F(\alpha)\,d(\P_\varepsilon-\P)(\alpha)
 \longrightarrow
 \int_\Omega F'(\alpha)\,d\nu(\alpha)
\]
in $W^{1,1}(I)$. This is precisely \eqref{eq:forcing-term} and
\eqref{eq:q-main}.

Finally, $\int_I F(\alpha)\,dm=\int_Ih_0\,dm=1$ for every $\alpha$.
Differentiating this constant identity gives
$\int_I F'(\alpha)\,dm=0$. Hence $\int_Iq\,dm=0$, so
$q\in W^{1,1}_0(I)$.
\end{proof}

\subsection{Perturbed spectral gap and linear response}

\begin{lemma}\label{lem:uniform-gap}
There exist $\varepsilon_0>0$, $C>0$ and $\rho\in(0,1)$ such that, for every $|\varepsilon|\le\varepsilon_0$, the operator
$L_{\P_\varepsilon}$ admits a stationary density
\[
 h_\varepsilon\in W^{2,1}(I),
 \qquad h_\varepsilon\ge0,
 \qquad \int_Ih_\varepsilon\,dm=1,
\]
which is unique among stationary densities in $W^{1,1}(I)$. Moreover, for $k=1,2$,
\begin{equation}\label{eq:uniform-gap}
 \norm{L_{\P_\varepsilon}^n\phi}_{W^{k,1}}
 \le
 C\rho^n\norm{\phi}_{W^{k,1}},
 \qquad
 \phi\in W^{k,1}_0(I),\quad n\ge0,
\end{equation}
with the same constants for all such $\varepsilon$.
\end{lemma}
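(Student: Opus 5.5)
We apply the Keller--Liverani stability theorem \cite{KL} twice: to the pair $(W^{1,1}(I),L^2(I))$, and to the pair $(W^{2,1}(I),W^{1,1}(I))$.

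\textbf{Verifying the hypotheses.} Lemma~\ref{lem:annealed-LY} gives uniform Lasota--Yorke inequalities for the family $\{L_{\P_\varepsilon}\}_{\varepsilon\in V}$ in both pairs. The weak norms are uniformly bounded: on $L^2(I)$ by Lemma~\ref{lem:LY-W11} after integrating in $\alpha$, and on $W^{1,1}(I)$ by \eqref{eq:uniform-W11-bound}. Closeness in the mixed norm is \eqref{eq:annealed-mixed-close} for the first pair. For the second pair we need
\[
\sup_{\norm{\phi}_{W^{2,1}}\le1}\norm{(L_{\P_\varepsilon}-L_{\P})\phi}_{W^{1,1}}\to0 .
\]
I would copy the proof of \eqref{eq:annealed-mixed-close}. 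First, for fixed $\alpha_n\to\alpha$, I show convergence uniformly over the unit ball of $W^{2,1}(I)$. This uses a finite $W^{1,1}$-net (by the compact embedding $W^{2,1}\Subset W^{1,1}$), the uniform $W^{1,1}$ bound \eqref{eq:uniform-W11-bound}, and the pointwise $W^{1,1}$-continuity of $\alpha\mapsto L_\alpha\phi$ proved in Lemma~\ref{lem:annealed-continuity}. Then the partition-of-unity argument together with \eqref{eq:weak-convergence} finishes it. Alternatively, Lemma~\ref{lem:parameter-derivative} gives Lipschitz dependence $\norm{(L_\alpha-L_\gamma)h}_{W^{1,1}}\le C|\alpha-\gamma|\norm{h}_{W^{2,1}}$, which makes uniform continuity immediate. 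Finally, Proposition~\ref{prop:unperturbed-system} shows that $1$ is a simple isolated eigenvalue of $L_{\P}$ on both spaces and that the rest of the spectrum lies in a disk of radius $<1$.

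\textbf{Applying Keller--Liverani.} Fix $\rho$ strictly between that radius and the Lasota--Yorke rates, and strictly below $1$. Keller--Liverani then gives, for $|\varepsilon|\le\varepsilon_0$:
\begin{itemize}
\item uniformly bounded resolvents outside $\{|z|\le\rho\}\cup\{|z-1|\le\delta\}$;
\item spectral projectors $\Pi_\varepsilon$ onto the part of the spectrum near $1$, of rank one, close to $\Pi_0$ in the mixed norm;
\item no other spectrum outside $\{|z|\le\rho\}$.
\end{itemize}
Since $L_{\P_\varepsilon}^*\ell=\ell$, the unique eigenvalue near $1$ must be $1$ itself. Indeed, its eigenvector $h$ satisfies $\ell(h)(\lambda-1)=0$, and $\ell(h)\ne0$ because $\Pi_\varepsilon$ is close to $\Pi_0=h_0\otimes\ell$, so $\lambda=1$. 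Normalise $h_\varepsilon$ so that $\ell(h_\varepsilon)=1$, and obtain it from the $W^{2,1}$ application, so $h_\varepsilon\in W^{2,1}(I)$. The two eigenvectors coincide, because the $W^{2,1}$ eigenvector is also a fixed vector in $W^{1,1}$ and the fixed space there is one-dimensional. For positivity, take the eigenvector $u$ real and use $L_{\P_\varepsilon}|u|\ge|u|$ with equal integrals, exactly as in Proposition~\ref{prop:unperturbed-system}. This gives $L_{\P_\varepsilon}|u|=|u|$, and simplicity forces $|u|=cu$, so $h_\varepsilon\ge0$. Uniqueness among $W^{1,1}$ stationary densities follows from the one-dimensionality of the eigenspace at $1$.

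\textbf{The decay estimate.} The subspace $W^{k,1}_0(I)=\ker\ell=\ker\Pi_\varepsilon$ is invariant. On it, the spectral radius of $L_{\P_\varepsilon}$ is at most $\rho'<\rho$, where $\rho'$ is the Keller--Liverani radius. Writing $L_{\P_\varepsilon}^n(I-\Pi_\varepsilon)$ as a Cauchy integral over $|z|=\rho$ and using the uniform resolvent bound gives \eqref{eq:uniform-gap} with constants independent of $\varepsilon$.

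\textbf{Main obstacle.} I expect the main difficulty to be the uniformity of constants, that is, making sure the Keller--Liverani resolvent bound applies with one $\varepsilon_0$ for both $k=1,2$. This is handled by taking the minimum of the two thresholds. A secondary difficulty is the $W^{2,1}\to W^{1,1}$ mixed-norm continuity, which I would settle with the Lipschitz bound from Lemma~\ref{lem:parameter-derivative} and weak convergence of $\P_\varepsilon$.
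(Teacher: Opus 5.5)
Your proposal is correct, but it handles the case $k=2$ by a different mechanism than the paper.

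\textbf{The paper's route.} The paper applies Keller--Liverani only to the pair $(W^{1,1}(I),L^2(I))$, and again to the restrictions to $(W^{1,1}_0(I),L^2_0(I))$ for the decay. It obtains $h_\varepsilon\in W^{2,1}(I)$ from Hennion quasi-compactness of $L_{\P_\varepsilon}$ on $W^{2,1}(I)$ together with $1\in\spec(L_{\P_\varepsilon})$. It gets the $W^{2,1}_0$ decay by a bootstrap: iterate \eqref{eq:annealed-W21}, insert the uniform $W^{1,1}_0$ decay into the lower-order terms, and finish with a convolution estimate. That route needs no closeness of $L_{\P_\varepsilon}$ to $L_{\P}$ as operators $W^{2,1}\to W^{1,1}$. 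It also does not use the unperturbed $W^{2,1}$ gap at all.

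\textbf{Your route.} Your second application of Keller--Liverani to $(W^{2,1}(I),W^{1,1}(I))$ requires exactly that extra mixed-norm estimate. You supply it correctly, either by a finite $W^{1,1}$-net plus \eqref{eq:uniform-W11-bound}, or by the Lipschitz bound from \eqref{eq:operator-FTC} and \eqref{eq:parameter-derivative-bound}, followed in both cases by the partition-of-unity argument. In exchange you get more for free:
\begin{itemize}
\item uniform boundedness of $\Pi_\varepsilon=h_\varepsilon\otimes\ell$ on $W^{2,1}(I)$ gives $\sup_\varepsilon\norm{h_\varepsilon}_{W^{2,1}}<\infty$ without the paper's quadratic-inequality argument;
\item closeness of $\Pi_\varepsilon$ to $\Pi_0$ in the mixed norm gives $h_\varepsilon=\Pi_\varepsilon h_0\to h_0$ in $W^{1,1}(I)$.
\end{itemize}

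\textbf{Zero-mean subspace.} You identify $\ker\ell=\ker\Pi_\varepsilon$; this holds because $\ell$ annihilates $\operatorname{Ran}(L_{\P_\varepsilon}-I)$, which contains $\operatorname{Ran}(I-\Pi_\varepsilon)$. You then write $L_{\P_\varepsilon}^n(I-\Pi_\varepsilon)$ as a Cauchy integral on the whole space. This is a clean alternative to the paper's restriction of the operators. Keep the Keller--Liverani radius and the contour radius distinct, with the contour radius strictly between the Keller--Liverani radius and $1-\delta$; at present you use $\rho$ for both.
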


\begin{proof}
Lemma~\ref{lem:annealed-LY} gives uniform Lasota--Yorke
inequalities. Moreover, the uniform $L^2$ bound of
Lemma~\ref{lem:LY-W11} implies, for some $M\ge1$ independent of
$\varepsilon$,
\[
 \norm{L_{\P_\varepsilon}^n}_{L^2\to L^2}
 \le M^n.
\]
Iterating \eqref{eq:annealed-W11}, we obtain
\begin{equation}\label{eq:iterated-W11-LY}
 \norm{L_{\P_\varepsilon}^n\phi}_{W^{1,1}}
 \le
 C\lambda_1^n\norm{\phi}_{W^{1,1}}
 +CM^n\norm{\phi}_{L^2}.
\end{equation}
Together with \eqref{eq:annealed-mixed-close}, these are the
hypotheses of the Keller--Liverani theorem \cite{KL} for the pair $(W^{1,1}(I),L^2(I))$.

By Proposition~\ref{prop:unperturbed-system}, the eigenvalue $1$ of
$L_{\P}$ is algebraically simple and the rest of its spectrum on
$W^{1,1}(I)$ is contained in a disk of radius strictly smaller than
one. Keller--Liverani spectral stability therefore implies that, for
all sufficiently small $|\varepsilon|$, there is a
one-dimensional spectral subspace in a fixed neighbourhood of $1$,
while the remaining spectrum is uniformly contained in a disk of
radius strictly smaller than one.

Since $L_{\P_\varepsilon}$ preserves integrals,
\[
 L_{\P_\varepsilon}^{*}\ell=\ell,
 \qquad
 \ell(\phi)=\int_I\phi\,dm,
\]
and hence $1$ belongs to the spectrum of every
$L_{\P_\varepsilon}$. Since the spectrum outside the spectral
subspace near $1$ is uniformly contained in a disk of radius smaller
than one, the unique spectral value in that subspace is exactly $1$.
Its total algebraic multiplicity is one.

Choose a nonzero real fixed vector
$u_\varepsilon\in W^{1,1}(I)$. Positivity gives
\[
 L_{\P_\varepsilon}|u_\varepsilon|
 \ge
 |L_{\P_\varepsilon}u_\varepsilon|
 =
 |u_\varepsilon|.
\]
The two sides have the same integral, and therefore $L_{\P_\varepsilon}|u_\varepsilon| =
 |u_\varepsilon|$.

Thus
\[
 h_\varepsilon
 :=
 \frac{|u_\varepsilon|}
 {\int_I|u_\varepsilon|\,dm}
\]
is a stationary density in $W^{1,1}(I)$. The one-dimensionality of
the eigenspace at $1$ also shows that this density is unique among
stationary densities in $W^{1,1}(I)$.

Put
\[
 L^2_0(I)
 :=
 \left\{
 \phi\in L^2(I):
 \int_I\phi\,dm=0
 \right\},
 \qquad
 T_\varepsilon
 :=
 L_{\P_\varepsilon}\big|_{W^{1,1}_0(I)}.
\]
Since $L_{\P_\varepsilon}$ preserves integrals, the operator
$T_\varepsilon$ maps $W^{1,1}_0(I)$ into itself, and its weak
extension maps $L^2_0(I)$ into itself.

The uniform Lasota--Yorke estimate
\eqref{eq:iterated-W11-LY}, the weak $L^2$ growth estimate and the
mixed-norm convergence \eqref{eq:annealed-mixed-close} remain valid
after restriction to the pair $\bigl(W^{1,1}_0(I),L^2_0(I)\bigr).$ By Proposition~\ref{prop:unperturbed-system}, $r(T_0)<1$. Choose $r_*\in(0,1)$ such that
\[
 \lambda_1<r_*,
 \qquad
 \spec(T_0)\subset\{z\in\mathbb C:|z|<r_*\}.
\]
The Keller--Liverani resolvent estimates
\cite[Theorem~1 and Corollary~1]{KL}, applied to the restricted
operators, imply that, after decreasing $\varepsilon_0$ if necessary,
\begin{equation}\label{eq:restricted-spectrum}
 \spec(T_\varepsilon)
 \subset
 \{z\in\mathbb C:|z|<r_*\}
\end{equation}
for every $|\varepsilon|\le\varepsilon_0$.

Fix $\rho_1\in(r_*,1)$. The same theorem gives the uniform resolvent bound
\begin{equation}\label{eq:uniform-zero-resolvent}
 \sup_{|\varepsilon|\le\varepsilon_0}
 \sup_{|z|=\rho_1}
 \norm{(zI-T_\varepsilon)^{-1}}_{W^{1,1}_0\to W^{1,1}_0}
 <\infty.
\end{equation}
Since the circle $|z|=\rho_1$ surrounds
$\spec(T_\varepsilon)$, the Dunford formula gives
\[
 T_\varepsilon^n
 =
 \frac{1}{2\pi i}
 \int_{|z|=\rho_1}
 z^n(zI-T_\varepsilon)^{-1}\,dz.
\]
Using \eqref{eq:uniform-zero-resolvent}, we obtain
\[
 \norm{T_\varepsilon^n}_{W^{1,1}_0\to W^{1,1}_0}
 \le
 C\rho_1^n,
 \qquad n\ge0,
\]
where $C$ is independent of sufficiently small $\varepsilon$.
Equivalently,
\begin{equation}\label{eq:uniform-W11-gap}
 \norm{L_{\P_\varepsilon}^n\phi}_{W^{1,1}}
 \le
 C\rho_1^n\norm{\phi}_{W^{1,1}},
 \qquad
 \phi\in W^{1,1}_0(I),\quad n\ge0.
\end{equation}

We next obtain a uniform $W^{1,1}$ bound for the stationary
densities. The first-order seminorm estimate from the proof of
Lemma~\ref{lem:LY-W11}, applied to
$L_{\P_\varepsilon}h_\varepsilon=h_\varepsilon$, gives
\[
 (1-\lambda_1)
 \norm{h_\varepsilon'}_{L^1}
 \le
 C\norm{h_\varepsilon}_{L^2}.
\]
Since $h_\varepsilon\ge0$ and
$\norm{h_\varepsilon}_{L^1}=1$, the one-dimensional Sobolev estimate
gives
\[
 \norm{h_\varepsilon}_{L^\infty}
 \le
 1+\norm{h_\varepsilon'}_{L^1}.
\]
Consequently,
\[
 \norm{h_\varepsilon}_{L^2}^2
 \le
 \norm{h_\varepsilon}_{L^\infty}
 \norm{h_\varepsilon}_{L^1}
 \le
 1+\norm{h_\varepsilon'}_{L^1}.
\]
Combining the preceding inequalities gives
\[
 (1-\lambda_1)
 \norm{h_\varepsilon'}_{L^1}
 \le
 C\left(
 1+\norm{h_\varepsilon'}_{L^1}
 \right)^{1/2}.
\]
After squaring, this is a quadratic inequality for
$\norm{h_\varepsilon'}_{L^1}$ and therefore yields
\begin{equation}\label{eq:uniform-h-W11}
 \sup_{|\varepsilon|\le\varepsilon_0}
 \norm{h_\varepsilon}_{W^{1,1}}
 <\infty.
\end{equation}

It remains to prove the $W^{2,1}$ regularity. By
\eqref{eq:annealed-W21} and the compact embedding $W^{2,1}(I)\Subset W^{1,1}(I)$, the operator $L_{\P_\varepsilon}$ is quasi-compact on
$W^{2,1}(I)$, with essential spectral radius at most
$\lambda_2<1$. Since it preserves integrals, $1$ belongs to its
spectrum. As $1$ lies outside the essential spectral disk, it is an
isolated eigenvalue. Let
\[
 0\ne w_\varepsilon\in W^{2,1}(I),
 \qquad
 L_{\P_\varepsilon}w_\varepsilon=w_\varepsilon.
\]
Viewed in $W^{1,1}(I)$, the function $w_\varepsilon$ belongs to the
one-dimensional eigenspace at $1$ and is therefore a nonzero scalar
multiple of $h_\varepsilon$. Hence $h_\varepsilon\in W^{2,1}(I)$.

Applying \eqref{eq:annealed-W21} to the fixed density gives
\[
 (1-\lambda_2)
 \norm{h_\varepsilon}_{W^{2,1}}
 \le
 C_2\norm{h_\varepsilon}_{W^{1,1}}.
\]
Together with \eqref{eq:uniform-h-W11}, this gives
\[
 \sup_{|\varepsilon|\le\varepsilon_0}
 \norm{h_\varepsilon}_{W^{2,1}}
 <\infty.
\]

Finally, let $\phi\in W^{2,1}_0(I)$. Iterating
\eqref{eq:annealed-W21} yields
\[
 \norm{L_{\P_\varepsilon}^n\phi}_{W^{2,1}}
 \le
 \lambda_2^n\norm{\phi}_{W^{2,1}}
 +
 C\sum_{j=0}^{n-1}
 \lambda_2^{n-1-j}
 \norm{L_{\P_\varepsilon}^j\phi}_{W^{1,1}}.
\]
The zero-average space is invariant, so
\eqref{eq:uniform-W11-gap} applies to every term in the sum. Thus
\[
 \norm{L_{\P_\varepsilon}^n\phi}_{W^{2,1}}
 \le
 \left(
 \lambda_2^n
 +
 C\sum_{j=0}^{n-1}
 \lambda_2^{n-1-j}\rho_1^j
 \right)
 \norm{\phi}_{W^{2,1}}.
\]
Choose $\rho\in\bigl(\max\{\lambda_2,\rho_1\},1\bigr)$. The elementary convolution estimate gives
\[
 \lambda_2^n
 +
 \sum_{j=0}^{n-1}
 \lambda_2^{n-1-j}\rho_1^j
 \le
 C\rho^n.
\]
Therefore,
\[
 \norm{L_{\P_\varepsilon}^n\phi}_{W^{2,1}}
 \le
 C\rho^n\norm{\phi}_{W^{2,1}},
 \qquad
 \phi\in W^{2,1}_0(I).
\]
After enlarging $C$ and $\rho$ if necessary,
\eqref{eq:uniform-gap} holds simultaneously for $k=1$ and $k=2$.
\end{proof}

\begin{proof}[Proof of Theorem~\ref{thm:main-LR}]
Lemma~\ref{lem:uniform-gap} proves the existence, regularity and uniqueness assertions. It also gives
\begin{equation}\label{eq:uniform-resolvent-bound}
 \sup_{|\varepsilon|\le\varepsilon_0}
 \norm{(I-L_{\P_\varepsilon})^{-1}}_{W^{1,1}_0\to W^{1,1}}
 <\infty.
\end{equation}
Indeed, on $W^{1,1}_0(I)$ the inverse is represented by the convergent series
$\sum_{n\ge0}L_{\P_\varepsilon}^n$.

Using
$L_{\P_\varepsilon}h_\varepsilon=h_\varepsilon$ and
$L_{\P}h_0=h_0$, we obtain
\[
 (I-L_{\P_\varepsilon})(h_\varepsilon-h_0)
 =
 (L_{\P_\varepsilon}-L_{\P})h_0.
\]
Since both densities have integral one,
$h_\varepsilon-h_0\in W^{1,1}_0(I)$, and hence
\begin{equation}\label{eq:difference-quotient-resolvent}
 \frac{h_\varepsilon-h_0}{\varepsilon}
 =
 (I-L_{\P_\varepsilon})^{-1}
 \frac{(L_{\P_\varepsilon}-L_{\P})h_0}{\varepsilon}.
\end{equation}
By Lemma~\ref{lem:forcing-term},
\[
 \frac{(L_{\P_\varepsilon}-L_{\P})h_0}{\varepsilon}
 \longrightarrow q
 \ \ \text{in} \ \ W^{1,1}_0(I).
\]
Moreover, the resolvent identity gives
\begin{align*}
 &(I-L_{\P_\varepsilon})^{-1}q
 -(I-L_{\P})^{-1}q=
 (I-L_{\P_\varepsilon})^{-1}
 (L_{\P_\varepsilon}-L_{\P})
 (I-L_{\P})^{-1}q.
\end{align*}
The function $(I-L_{\P})^{-1}q$ belongs to $W^{1,1}_0(I)$.
Lemma~\ref{lem:annealed-continuity} makes the middle factor tend to zero in $W^{1,1}(I)$, and
\eqref{eq:uniform-resolvent-bound} therefore gives
\[
 (I-L_{\P_\varepsilon})^{-1}q
 \longrightarrow
 (I-L_{\P})^{-1}q
 \ \ \text{in} \ \ W^{1,1}(I).
\]
Finally,
\begin{align*}
 &\frac{h_\varepsilon-h_0}{\varepsilon}
 -(I-L_{\P})^{-1}q=
 (I-L_{\P_\varepsilon})^{-1}
 \left[
 \frac{(L_{\P_\varepsilon}-L_{\P})h_0}{\varepsilon}-q
 \right]+
 \left[(I-L_{\P_\varepsilon})^{-1}
 -(I-L_{\P})^{-1}\right]q.
\end{align*}
The first term tends to zero by Lemma~\ref{lem:forcing-term} and
\eqref{eq:uniform-resolvent-bound}, while the second tends to zero by the preceding resolvent identity. Thus
\[
 \frac{h_\varepsilon-h_0}{\varepsilon}
 \longrightarrow
 (I-L_{\P})^{-1}q
 \ \ \text{in} \ \ W^{1,1}(I),
\]
which proves \eqref{eq:main-response}.
\end{proof}

\section{A concrete admissible family}\label{sec:example}

We finish by showing that the assumptions of Section~\ref{sec:setting}
are non-vacuous. Let
\[
 c=\frac12,
 \qquad
 a=1,
 \qquad
 -1<\beta<-\frac12,
 \qquad
 1<\vartheta<2,
\]
and put
\[
 K
 :=
 \left(1-\frac{\vartheta}{2}\right)
 (\beta+1)2^{\beta+1}.
\]
Since $\beta+1>0$ and $\vartheta<2$, one has $K>0$. Define
\begin{equation}\label{eq:example-f0}
 f_0(x)
 =
 \begin{cases}
 \displaystyle
 \vartheta x
 +
 \frac{K}{\beta+1}
 \left[
 2^{-(\beta+1)}
 -
 \left(\frac12-x\right)^{\beta+1}
 \right],
 &0\le x\le\frac12,\\[4mm]
 \displaystyle
 \vartheta(1-x)
 +
 \frac{K}{\beta+1}
 \left[
 2^{-(\beta+1)}
 -
 \left(x-\frac12\right)^{\beta+1}
 \right],
 &\frac12\le x\le1.
 \end{cases}
\end{equation}
The choice of $K$ gives
\[
 f_0(0)=f_0(1)=0
\]
and
\[
 f_0\left(\frac12\right)
 =
 \frac{\vartheta}{2}
 +
 \frac{K}{\beta+1}2^{-(\beta+1)}
 =
 \frac{\vartheta}{2}
 +
 1-\frac{\vartheta}{2}
 =1.
\]
For $x\ne\frac12$,
\[
 f_0'(x)
 =
 \begin{cases}
 \displaystyle
 \vartheta
 +
 K\left(\frac12-x\right)^\beta,
 &x<\frac12,\\[2mm]
 \displaystyle
 -\vartheta
 -
 K\left(x-\frac12\right)^\beta,
 &x>\frac12.
 \end{cases}
\]
Consequently,
\[
 |f_0'(x)|>\vartheta>1.
\]
Each branch is monotone and maps its half interval onto $I$.

Writing $t=|x-\frac12|$, one has
\[
 \frac{|f_0'(x)|}{t^\beta}
 =
 K+\vartheta t^{-\beta}.
\]
Since $0<t\le\frac12$, the right-hand side is bounded above and below
by positive constants. Therefore
\[
 |f_0'(x)|
 \asymp
 \left|x-\frac12\right|^\beta.
\]
Direct differentiation also gives
\[
 |f_0''(x)|
 =
 O\left(
 \left|x-\frac12\right|^{\beta-1}
 \right),
 \qquad
 |f_0'''(x)|
 =
 O\left(
 \left|x-\frac12\right|^{\beta-2}
 \right).
\]

\begin{figure}[t]
\centering
\begin{tikzpicture}[x=8cm,y=5cm,>=stealth]

\pgfmathsetmacro{\betaval}{-0.75}
\pgfmathsetmacro{\thetaval}{1.5}
\pgfmathsetmacro{\pval}{\betaval+1}
\pgfmathsetmacro{\Kval}{
 (1-\thetaval/2)*\pval*pow(2,\pval)
}

\draw[->] (0,0) -- (1.06,0) node[right] {$x$};
\draw[->] (0,0) -- (0,1.08) node[above] {$y$};

\draw[dashed,gray] (0.5,0) -- (0.5,1);
\draw[dashed,gray] (0,1) -- (0.5,1);

\draw[
 black,
 line width=0.9pt,
 domain=0:0.4999,
 samples=250,
 smooth,
 variable=\x
]
plot
({
 \x
},
{
 \thetaval*\x
 +
 \Kval/\pval*
 (
  pow(2,-\pval)
  -
  pow(0.5-\x,\pval)
 )
});

\draw[
 black,
 line width=0.9pt,
 domain=0.5001:1,
 samples=250,
 smooth,
 variable=\x
]
plot
({
 \x
},
{
 \thetaval*(1-\x)
 +
 \Kval/\pval*
 (
  pow(2,-\pval)
  -
  pow(\x-0.5,\pval)
 )
});

\fill (0,0) circle (0.55pt);
\fill (0.5,1) circle (0.65pt);
\fill (1,0) circle (0.55pt);

\node[below] at (0,0) {\small $0$};
\node[below] at (0.5,0) {\small $c=\frac12$};
\node[below] at (1,0) {\small $1$};
\node[left] at (0,1) {\small $a=1$};
\node at (0.77,0.72) {\small $f_0$};

\end{tikzpicture}
\caption{A representative admissible cusp map from
\eqref{eq:example-f0}, with $\beta=-\frac34$,
$\vartheta=\frac32$, common cusp point $c=\frac12$ and common cusp
value $a=1$. Its two branches are full and uniformly expanding.}
\label{fig:cusp-family}
\end{figure}

We now introduce a non-trivial parameter dependence. Define
\begin{equation}\label{eq:example-psi}
 \psi(x)
 =
 \begin{cases}
 \displaystyle
 x\left(\frac12-x\right)^{\beta+2},
 &0\le x<\frac12,\\[2mm]
 0,
 &x=\frac12,\\[2mm]
 \displaystyle
 (1-x)\left(x-\frac12\right)^{\beta+2},
 &\frac12<x\le1,
 \end{cases}
\end{equation}
and
\[
 f_\alpha
 :=
 f_0+\alpha\psi,
 \qquad
 \alpha\in[-\alpha_0,\alpha_0].
\]
Notice that
\[
 \psi(0)=\psi\left(\frac12\right)=\psi(1)=0.
\]
Moreover, as $x\to\frac12$,
\[
 \psi^{(k)}(x)
 =
 O\left(
 \left|x-\frac12\right|^{\beta+2-k}
 \right),
 \qquad
 k=0,1,2,3.
\]
In particular, for $k=0,1,2$,
\[
 \partial_\alpha f_\alpha^{(k)}
 =
 \psi^{(k)}
 =
 O\left(
 \left|x-\frac12\right|^{\beta+2-k}
 \right),
\]
which is stronger than the bound required in \textup{(A6)}.

The function $\psi'$ is bounded. Hence, by taking $\alpha_0>0$
sufficiently small, we have
\[
 \inf_{\substack{|\alpha|\le\alpha_0\\x\ne1/2}}
 |f_\alpha'(x)|
 \ge
 \vartheta-\alpha_0\norm{\psi'}_{L^\infty}
 >1.
\]
Thus the signs of the two branch derivatives do not change. Since the
endpoint and cusp values are fixed, every $f_\alpha$ maps $I$ into
$I$ and has two monotone full branches.

Near the cusp,
\[
 \frac{|\psi'(x)|}{|x-\frac12|^\beta}
 =
 O\left(|x-\tfrac12|\right),
\]
while
\[
 \psi''(x)
 =
 O\left(|x-\tfrac12|^\beta\right),
 \qquad
 \psi'''(x)
 =
 O\left(|x-\tfrac12|^{\beta-1}\right).
\]
It follows, after decreasing $\alpha_0$ if necessary, that the
two-sided derivative estimate and the second- and third-derivative
bounds in \textup{(A5)} hold uniformly for
$|\alpha|\le\alpha_0$. The continuity and mixed-derivative
requirements in \textup{(A6)} are immediate because the family
depends affinely on $\alpha$. Hence \textup{(A1)--(A6)} hold
uniformly on $\Omega=[-\alpha_0,\alpha_0]$.

The two inverse branches of $f_0$ have Lipschitz constants at most
$\vartheta^{-1}<1$. Consequently, every cylinder interval of rank
$n$ has diameter at most $\vartheta^{-n}$ and is mapped by $f_0^n$
onto $I$. Given a nonempty open interval $U\subset I$, choose a point
of $U$ which is not an endpoint of any cylinder. For all sufficiently
large $n$, the rank-$n$ cylinder containing this point is contained
in $U$. It follows that $f_0^n(U)=I$ for some $n$, so $f_0$ is topologically exact and, in particular,
topologically mixing on $I$.

Take $\P=\delta_0$. Then $\Supp\P=\{0\}$, and the restricted skew product in
\textup{(A7)} reduces to the dynamics of $f_0$. Therefore
\textup{(A7)} holds.

Finally, let
\[
 V=(-\alpha_0,\alpha_0),
 \qquad
 \P_\varepsilon=\delta_\varepsilon,
 \qquad
 \varepsilon\in V.
\]
For every $\varphi\in C^1(\Omega)$,
\[
 \left.
 \frac{d}{d\varepsilon}
 \int_\Omega
 \varphi(\alpha)\,d\P_\varepsilon(\alpha)
 \right|_{\varepsilon=0}
 =
 \left.
 \frac{d}{d\varepsilon}
 \varphi(\varepsilon)
 \right|_{\varepsilon=0}
 =
 \varphi'(0)
 =
 \int_\Omega
 \varphi'(\alpha)\,d\delta_0(\alpha).
\]
Thus the differentiability assumption on the probability laws holds
with $\nu=\delta_0$. This gives a non-trivial family to which
Theorem~\ref{thm:main-LR} applies.

\subsection{Acknowledgments.} DO has received funding from the Ministry of Higher Education, Science and Innovation of the Republic of Uzbekistan under State Grant No. FL9524115114.

\end{document}